\definecolor{darkblue}{rgb}{0.0,0.0,0.7}
\newtheorem*{rep@theorem}{\rep@title}
\newcommand{\newreptheorem}[2]{%
	\newenvironment{rep#1}[1]{%
		\def\rep@title{#2 \ref{##1}'}%
		\begin{rep@theorem}}%
		{\end{rep@theorem}}}
\newtheorem{theorem}{Theorem}
\newtheorem{remark}{Remark}
\newtheorem{corollary}{Corollary}
\newtheorem{proposition}{Proposition}
\newtheorem{lemma}{Lemma}
\newtheorem{assumption}{Assumption}
\definecolor{darkblue}{rgb}{0.0,0.0,0.7}
\newcommand{\norm}[1]{\|#1\|}%
\newcommand{\con}{\mathrm{con}}
\newcommand{\normop}[1]{\norm{#1}_{\op}}
\newcommand{\Bignormop}[1]{\Big\|#1\Big\|_{\op}}
\newcommand{\dsone}{{\mathds 1}}
\newcommand{\tT}{\mathds{T}}
\newcommand{\ind}[1]{\mathbf 1_{#1}}%
\newcommand{\E}{\mathbbm E}
\newcommand{\cS}{\mathscr S}
\renewcommand{\P}{\mathbbm P}
\newcommand{\sS}{\mathscr S}
\newcommand{\cadlag}{c\`adl\`ag}
\newcommand{\bA}{{\boldsymbol A}}%
\newcommand{\bB}{{\boldsymbol B}}%
\newcommand{\bC}{{\boldsymbol C}}%
\newcommand{\bD}{{\boldsymbol D}}%
\newcommand{\bE}{{\boldsymbol E}}%
\newcommand{\bG}{{\boldsymbol G}}%
\newcommand{\bH}{{\boldsymbol H}}%
\newcommand{\bI}{{\boldsymbol I}}%
\newcommand{\bJ}{{\boldsymbol J}}%
\newcommand{\bM}{{\boldsymbol M}}%
\newcommand{\bN}{{\boldsymbol N}}%
\newcommand{\bO}{{\boldsymbol 0}}%
\newcommand{\bP}{{\boldsymbol P}}%
\newcommand{\bR}{{\boldsymbol R}}%
\newcommand{\bS}{{\boldsymbol S}}%
\newcommand{\bU}{{\boldsymbol U}}%
\newcommand{\bV}{{\boldsymbol V}}%
\newcommand{\bW}{{\boldsymbol W}}%
\newcommand{\bX}{{\boldsymbol X}}%
\newcommand{\bY}{{\boldsymbol Y}}%
\newcommand{\bZ}{{\boldsymbol Z}}%
\newcommand{\blambda}{{\boldsymbol \lambda}}%
\newcommand{\bLambda}{{\boldsymbol \Lambda}}%
\newcommand{\lmax}{\lambda_{\max}}
\newcommand{\bul}{\bullet}
\DeclareMathOperator{\op}{op}
\DeclareMathOperator{\tr}{tr}
\DeclareMathOperator{\diag}{diag}
\renewcommand{\vec}{\mathrm{vec}}
\newcommand{\1}{{\rm 1}\kern-0.24em{\rm I}}
\newcommand{\bs}{\boldsymbol}
\newcommand{\cF}{\mathscr F}
\newcommand{\R}{\mathbb R}
\newcommand{\N}{\mathbb N}
\newcommand{\inr}[1]{\langle #1 \rangle}
\newcommand{\Jmax}{J_{\max}}
\newcommand{\mgeq}{\succcurlyeq}%
\newcommand{\mleq}{\preccurlyeq}
\begin{document}

\title{Concentration inequalities for matrix martingales \\ in continuous time}

\author[1]{Emmanuel Bacry}
\author[1]{St\'ephane Ga\"iffas}
\author[1,2]{Jean-Fran\c{c}ois Muzy}
\affil[1]{\small Centre de Mathe\'ematiques Appliqu\'ees, \'Ecole Polytechnique and CNRS \authorcr UMR 7641, 91128 Palaiseau, France}
\affil[2]{\small Laboratoire Sciences Pour l'Environnement, CNRS, Universit\'e de Corse, \authorcr UMR 6134, 20250 Cort\'e, France}

\maketitle

\begin{abstract}
  This paper gives new concentration inequalities for the spectral
  norm of a wide class of matrix martingales in continuous time. 
  These results extend previously established Freedman and Bernstein inequalities for series of random matrices to the 
  class of continuous time processes.
  Our analysis relies on a new supermartingale property of the trace
  exponential proved within the framework of stochastic calculus. 
  We provide also several examples that illustrate the fact that our results allow us to recover easily several formerly obtained sharp bounds for discrete time matrix martingales.
\end{abstract}

\section{Introduction}

Matrix concentration inequalities control the deviation of a random matrix around its mean. Until now, results in literature consider the case of sums of independent random matrices, or matrix martingales in discrete time. A first matrix version of the Chernoff bound is given in~\cite{ahlswede2002strong} and was adapted to yield matrix analogues of standard scalar concentration inequalities in~\cite{christofides2008expansion, oliveira2009concentration, oliveira2010sums, minsker2011some}. Later, these results were improved in~\cite{tropp2012user, tropp2011freedman}, by the use of a theorem due to Lieb~\cite{lieb1973convex}, about the concavity of a trace exponential function, which is a deep result closely related to the joint convexity of quantum entropy in physics. See also~\cite{mackey2014matrix} for a family of sharper and more general results based on the Stein's method. These works contain extensions to random matrices of classical concentration inequalities for sums of independent scalar random variables, such as the Bernstein inequality for sub-exponential random variables, Hoeffding inequality for sub-Gaussian random variables or Freedman inequality for martingales, see e.g.~\cite{massart2007concentration} for a description of these classical inequalities. 
Matrix concentration inequalities have a plethora of applications, in particular in compressed sensing and statistical estimation~\cite{koltchinskii2011oracle}, to develop a simpler analysis of matrix completion~\cite{Gross11,recht2011simpler}, for matrix regression~\cite{negahban2012restricted, Koltchinskii11,rohde2011estimation, bunea2011optimal}, for randomized linear algebra~\cite{gittens2011spectral, mackey2011divide}, and robust PCA~\cite{Candes09b}, which are some examples from a large corpus of works.
On the other hand, concentration inequalities for scalar continuous-time martingales are well-known, see for instance \cite{liptser1989theory,van1995exponential} and \cite{reynaud2006compensator} for uniform versions of scalar concentration, and have a large number of applications in high-dimensional statistics \cite{hansen_reynaud_bouret_viroirard, gaiffas2012high} among many others.

Matrix martingales in continuous time are probabilistic objects that
naturally appear in many problems like e.g., for statistical learning of time-dependent
systems. No extension of the previously mentioned results in this framework is
available in literature. The aim of this paper is to provide such
results, by combining tools from random matrix theory and from stochastic calculus~\cite{liptser1989theory}. We establish concentration inequalities for a large class of continuous-time matrix martingales with
arbitrary predictable quadratic covariation tensor.
More precisely, we provide a matrix version of Freedman's inequality for purely discontinuous matrix martingales (see Theorem~\ref{thm:concentration_counting}) as well as continuous martingales (see
Theorem~\ref{thm:concentration_continuous}). 
We show that the variance term in the concentration inequalities is provided by the largest eigenvalue of the predictable quadratic covariation matrix.
In that respect, our results can be understood as extensions to continuous time martingales of previously established results by Tropp~\cite{tropp2011freedman} in the case of discrete time matrix martingales.
Our proofs techniques required a very different analysis than the discrete-time case, involving tools from stochastic calculus.

%Matrix martingales in
%continuous time are probabilistic objects that appear naturally in
%several problems, in particular in models that describe time-dependent
%systems, such as models used for the study of social networks at a
%``microscopic'' time scale. The microscopic time scale stands for a recent approach for the study of social-networks: instead of studying a fixed macroscopic graph of
%connected nodes (users, products) that are linked by edges (click,
%like, friendship, retweet, etc.), it considers instead
%the network at a more microscopic scale, by looking at the timestamps of the users actions, in order to recover an implicit connectivity of users. This approach has known a strong development recently: cascade models based on survival analysis are developped in~\cite{rodriguez2011uncovering,gomez13,gomez14a}, and models based on self-exciting point processes, namely
%the Hawkes process~\cite{hawkes1971spectra} are given in~\cite{crane2008robust, blundell2012modelling, zhou_zha_le_2013, yang2013mixture,linderman2014discovering,dubois2013stochastic,iwata2013discovering}, among others.

The paper is organized as follows. In
Section~\ref{sec:main_results}, after introducing some notations and defining the class of matrix martingales we consider, we state our main results for purely discontinuous martingales, see Section~\ref{sec:discountinuous_martingale} and continuous matrix martingales, see Section~\ref{sec:continuous_martingale}.
We provide some comments about the link with analogous results in discrete time and the sharpness of our bounds. In Section~\ref{sec:Applications}, we discuss some examples of application of our Theorems. We consider various particular situations that notably allows us to 
recover some known results concerning series of random matrices or scalar point processes.
% We also give a quick illustration of a use of our new concentration results in the context of low-rank multivariate Hawkes processes estimation, which is a family of self-excited counting processes of growing interest in machine learning, in particular in the field of finance and social networks, see for instance~\cite{bacry2013modelling} and~\cite{crane2008robust}.
Technical results and the proofs of the Theorems are gathered in Appendices~\ref{sec:tools},~\ref{sec:proof_of_theorem_1} and~\ref{sec:proof_theorem2}.
We notably establish a matrix supermartingale property (Proposition~\ref{prop:new-counting-supermart}) that is essential for obtaining the concentration inequalities.
%Our new concentration results allows for the theoretical study of such models, in particular for the multivariate Hawkes model, since it gives a sharp control of the noise term, which is in this case a purely discountinuous matrix martingale (see Section~\ref{sec:discountinuous_martingale}), as it comes from the compensation of a counting process.
%We give a quick illustration (see~Section~\ref{sec:Applications}) of a use of our new concentration results to low-rank multivariate Hawkes processes: low-rank modeling is now a standard approach in collaborative filtering problems~\cite{koren2010collaborative}, where a very popular strategy is to use a penalization technique based on a convex relaxation of the rank, given by the trace normn (see~\cite{Candes04, Candes09} and references mentioned above), which is given by the sum of singular values.
%For the sake of clarity, some technical proofs are gathered in Appendix \ref{appendix:prop3}.

\section{Main results}
\label{sec:main_results}

In this section, we give the main results of the paper, namely Theorems~\ref{thm:concentration_counting} and~\ref{thm:concentration_continuous}, 
that provide concentration inequalities for purely discontinuous and continuous matrix martingales.
We first begin by recalling some definitions from probability theory and stochastic calculus and 
set some notations.

\subsection{Probabilistic background and Notations}
\label{sec:probback}
% \subsection{Probabilistic background}

\paragraph{Probabilistic background.} % (fold)
\label{par:proba}

% paragraph proba (end)

We consider a complete probability space $(\Omega, \cF, \P)$ and a
filtration $\{ \cF_t \}_{t \geq 0}$ of $\sigma$-algebras included in
$\cF$. Expectation $\E$ is always taken with respect to $\P$. We
assume that the stochastic system $(\Omega, \cF, \{ \cF_t \}_{t \geq 0}, \P)$ satisfies the \emph{usual} conditions, namely that $\cF_0$ is augmented by the $\P$-null sets, and that the filtration is right continuous, namely $\cF_t = \cap_{u > t} \cF_u$ for any $t \geq 0$. We shall denote $\cF_{t^-}$ as the smallest $\sigma$-algebra containing all $\cF_s$ for $s < t$.

A matrix-valued stochastic processes $\{ \bX_t \}_{t \geq 0}$ is a
family of random matrices of constant size (e.g. $m \times n$) defined on $(\Omega, \cF, \P)$. We say that $\{ \bX_t \}_{t \geq 0}$ is adapted if for each $t \geq 0$, all the entries of $\bX_t$ are $\cF_t$-measurable. We say that it is \cadlag~if the trajectories on $[0, +\infty]$ of each entries have left limits and
are right continuous for all $\omega \in \Omega$. If $\{ \bX_t \}_{t \geq 0}$
is \cadlag, then we define its jump process $\{ \Delta \bX_t \}_{t \geq 0}$ where $\Delta \bX_t = \bX_t - \bX_{t^-}$. We say that $\{ \bX_t \}_{t \geq 0}$ is predictable if all its entries are \cadlag~and predictable.
We recall that a {\em predictable process} is a process that is measurable with respect to the $\sigma$-field
generated by left-continuous adapted processes. In particular, if $\tau$ is a stopping time and $X_t$
is a predictable process, then $X_\tau$ is $\cF_{\tau^-}$ measurable.

A \emph{matrix semimartingale} is a matrix-valued stochastic process whose entries are all semimartingales.
In the same way, a \emph{matrix martingale} $\{ \bM_t \}_{t \geq 0}$ is a matrix-valued stochastic process with entries that are all
martingales. Namely, we assume that for all possible indexes $(i, j)$ of entries, $(\bM_t)_{i, j}$ is adapted, \cadlag, such that $\E |(\bM_t)_{i, j}| < +\infty$ for all $t \geq 0$, and that
\begin{equation*}
  \E[\bM_t | \cF_s] = \bM_s
\end{equation*}
for any $0 \leq s \leq t$, where the conditional expectation is applied
entry-wise on $\bM_t$. More generally, expectations and conditional
expectations are always applied entry-wise. A brief review of tools from stochastic calculus
based on semimartingales is provided in appendix \ref{sec:ito}.

\paragraph{Notations.}
\label{sec:notations}

We denote by $\dsone$ the column vector with all entries equal to $1$ (with size depending on the context).
Let $\bX$ be a real matrix and $x$ a real vector.
The notations $\diag[x]$ stands for the diagonal matrix with diagonal equal to $x$, while if $\bX$ is a square matrix, $\diag[\bX]$ stands for the diagonal matrix with diagonal equal to the one of $\bX$, $\tr \bX$ stands for the trace of $\bX$. The operator norm (largest singular value) will be denoted by
$\normop{\bX}$.
We define also $|\bX|$ by taking the absolute value of each entry of $\bX$.

If $\bY$ is another real matrix,
the notation $\bX \odot \bY$ stands for the entry-wise product (Hadamard product) of $\bX$ and $\bY$ with same dimensions, namely $(\bX \odot \bY)_{j, k} = (\bX)_{j, k} (\bY)_{j, k}$. We shall denote by $\bX^{\odot k}$ the Hadamard power, where each entry of $\bX^{\odot k}$ is the $k$-th power of the corresponding entry of $\bX$.

We also denote $\bX_{\bul, j}$ for the $j$-th column of $\bX$ while $\bX_{j, \bul}$ stands for the $j$-th row.
Moreover, for a matrix $\bX$ and $p \geq 1$, we define the norms \begin{equation*}
  \norm{\bX}_{p, \infty} = \max_j \norm{\bX_{j, \bul}}_p \; \text{
    and } \; \norm{\bX}_{\infty, p} = \max_j \norm{\bX_{\bul, j}}_p,
\end{equation*}
where $\norm{\cdot}_p$ is the vector $\ell_p$-norm.

For a symmetric matrix $\bX$, the largest
eigenvalue  is denoted $\lmax(\bX)$.
Moreover, the symbol $\mleq$ stands for the positive 
semidefinite (p.s.d.) order on symmetric matrices, namely 
$\bs X \mleq \bs Y$ iff $\bs Y - \bs X$ is p.s.d.

We shall denote, when well-defined, $\int_0^t \bX_s ds$ for the
matrix of integrated entries of $\bX_s$, namely $(\int_0^t \bX_s
ds)_{i, j} = \int_0^t (\bX_s)_{i, j} ds$.
% , and note also that, when it
% makes sense, we have
% \begin{equation*}
%   \bA_s \mleq \bB_s \quad \forall s \geq 0 \; \Rightarrow \; \int_0^t
%   \bA_s ds \mleq \int_0^t \bB_s ds.
% \end{equation*}
We use also matrix notations for stochastic integrals, for instance $\int_0^t \bX_s d \bY_s$ stands for the matrix
with entries given by the stochastic integral 
\begin{equation*}
  \Big(\int_0^t \bX_s d \bY_s\Big)_{i, j} = \sum_k \int_0^t
  (\bX_s)_{i, k} d (\bY_s)_{k, j},
\end{equation*}
for stochastic processes $\bX_t$ and $\bY_t$ that are matrix-valued, such that the matrix product $\bX_t \bY_t$ makes sense, and such that these stochastic integrals are well-defined for all $i, j$.
We define similarly $\int_0^t d\bX_s \bY_s$.

Let $\tT$ be a rank 4 tensor of dimension $(m \times n \times p \times q)$ . It can be considered as a linear mapping from
$\R^{p \times q}$ to $\R^{m \times n}$ according to the following ``tensor-matrix'' product:
\begin{equation*}
  (\tT \circ \bA )_{i,j} = \sum_{k=1}^p \sum_{l=1}^q \tT_{i,j;k,l} 
  \bA_{k,l}.
\end{equation*}
We will denote by $\tT^{\top}$ the tensor such that $\tT^\top \circ \bA = (\tT \circ \bA)^{\top}$ (i.e., $\tT^{\top}_{i,j;k,l} = \tT_{j,i;k,l}$)
and by $\tT_{\bul;k,l}$ and $\tT_{i,j;\bul}$ the matrices obtained when fixing respectively the indices $k,l$ and $i,j$.
Notice that $(\tT \circ \bA )_{i,j} = \tr (\tT_{i,j;\bul} \bA^{\top})$.
If $\tT$ et $\tT'$ are two tensors of dimensions respectively $m \times n \times p \times q$ and $n \times r \times p \times q$, $\tT \tT'$ will stand for the tensor of dimension $m \times r \times p \times q$ defined as $(\tT \tT')_{i,j;k,l} = (\tT_{\bul;k,l} \tT'_{\bul;k,l})_{i,j}$. 
Accordingly, for an integer $r \geq 1$, if $\tT_{\bul;a,b}$ are square matrices, we will denote by $\tT^{r}$ the tensor such that 
$(\tT^{r})_{i,j;k,l} = (\tT_{\bul;k,l}^r)_{i,j}$.
We also introduce $\norm{\tT}_{\op; \infty} = \max_{k, l} 
\normop{\tT_{\bul; k, l}}$, the maximum operator norm of all matrices formed by the first two dimensions of tensor $\tT$.

\paragraph{The matrix martingale $\bZ_t$.}

In this paper we shall consider the class of $m \times n$ 
matrix martingales that can be written as
\begin{equation}
  \label{eq:martingale}
   \bZ_t = \int_0^t \tT_s \circ (\bC_s \odot d \bM_s),
\end{equation}
where $\tT_s$ is a rank 4 tensor with dimensions $m \times n \times p \times q$, whose components are assumed to be locally bounded predictable random functions. 
The process $\bM_t$ is a $p \times q$ is matrix with entries that are square integrable martingales with a diagonal quadratic covariation matrix (see Section~\ref{sec:ito} for the definition of the quadratic covariation matrix of a semimartingale matrix).
The matrix $\bC_s$ is a matrix of $p \times q$ predictable locally bounded functions.

More explicitly, the entries of $\bZ_t$ are given by
\begin{equation*}
(\bZ_t)_{i, j} = \sum_{k=1}^p \sum_{l=1}^q  \int_0^t (\tT_s)_{i,j;k,l} 
(\bC_s)_{k,l} (d \bM_s)_{k,l}.
\end{equation*}

Note that Equation~\eqref{eq:martingale} corresponds to a wide class of matrix martingales. 
This shape of matrix martingale, which involves a rank-4 tensor, is natural: all quadratic covariations between pairs of entries of $\bZ_t$ are accounted by the linear transformation $\tT$.
Let us remark that if one chooses $\tT_{i,j;k,l} = (\bA_s)_{i,k} (\bB_s)_{l,j}$ where 
$\bA_s$ and $\bB_s$ are respectively $m \times p$ and $q \times n$ matrices of predictable functions, then
\begin{equation}
\label{eq:martingale_old}
% \label{eq:martingale-form}
\bZ_t = \int_0^t \bA_s (\bC_s \odot d \bM_s) \bB_s.
\end{equation}
If one chooses $\tT$ of dimensions $(m \times n \times 1 \times 1)$ and $\bM_t = M_t$ a scalar martingale, then
\begin{equation*}
  \bZ_t = \int_0^t \bA_s dM_s,
\end{equation*}
where $(\bA_s)_{i,j} = (\tT_s)_{i,j; 1,1}$ is a constant matrix linear transform.

In Section~\ref{sec:Applications} below, we prove that such particular cases lead to generalizations to continuous-time martingales of previously known concentration inequalities for ``static'' random matrices.
In the following we will distinguish situations where the entries of 
$\bM_t$ are purely discontinuous martingales (see Section~\ref{sec:discountinuous_martingale})
and continuous martingales (see Section~\ref{sec:continuous_martingale}).
We recall the definitions of continuous and purely discontinuous martingales, along with other important notions from stochastic calculus in Section~\ref{sec:ito}.

% If $\bX$ and $\bY$ are real matrices with the same shape, we denote
% their inner product by
% \begin{equation*}
%   \inr{\bX, \bY} = \tr(\bX^\top \bY) = \sum_j \sum_k (\bX)_{j, k}
%   (\bY)_{j, k}.
% \end{equation*}
% The associated norm is the Frobenius norm given by $\norm{\bX}_F =
% \sqrt{\inr{\bX, \bX}}$.

% Notice that one has:
% \begin{equation}
% \label{vec2}
% \vec \bX =
%   \begin{bmatrix}
%     \bX_{\bul, 1}^\top \bX_{\bul, 2}^\top \cdots \bX_{\bul, q}^\top
%   \end{bmatrix}^\top.
% \end{equation}

\subsection{Purely discontinuous matrix martingales}
\label{sec:discountinuous_martingale}

In this section, we consider the case of a purely discontinuous
(this notion is defined in Appendix~\ref{sec:ito}) martingale $\bM_t$. 
More specifically, we assume that $\bM_t$ is a martingale coming from the compensation of a random matrix with entries that are  compound counting processes. We denote by $[\vec \bM]_t$ the quadratic covariation matrix of the vectorization of $\bM_t$ (defined in Appendix~\ref{sec:ito}).
\begin{assumption}
  \label{ass:M_d}
  Assume that $\bM_t$ is a
  purely discontinuous matrix-martingale with entries that are locally bounded. Moreover, we assume that they 
  do not jump at the same time, i.e. $[\vec \bM]_t$ is a diagonal matrix for any $t$.
  Moreover, assume that any $(i, j)$-entry satisfies
  \begin{equation}
  \label{eq:defMdis}
  (\Delta \bM_t)_{i, j} = (\bJ_{(\bN_t)_{i, j}})_{i, j} \times (\Delta \bN_t)_{i, j}
  \end{equation}
  where\textup:
  \begin{itemize}
  \item $\bN_t$ is a $p \times q$ matrix counting process \textup(i.e., each component is a counting process\textup) with an intensity process $\blambda_t$ which is predictable, continuous and with finite variations \textup(FV\textup)\textup;
  \item $(\bJ_n)_{n \in \N}$ is a sequence of $p \times q$ random matrices, independent of $(\tT_t)_{t \geq 0}, (\bC_t)_{t \geq 0}$ and $(\bN_t)_{t \geq 0}$ and identically distributed, such that $|(\bJ_1)_{i, j}| \leq \Jmax$ a.s. for any $i, j$ and $k \geq 1$, 
  where $\Jmax > 0$.
\end{itemize}
\end{assumption}

\begin{remark}
  Equation~\eqref{eq:defMdis} can be rewritten for short as 
$\Delta \bM_t = \bJ_{\bN_t} \odot \Delta \bN_t$. 
It imposes a mild condition on the structure of the jumps of $\bM_t$ that allows to derive an explicit form for the compensator of
 $\bZ_t$ \textup(see Section~\ref{sec:proof_of_theorem_1}\textup). 
Note that if $\bM_t$ is the martingale associated with a matrix counting process, then one can simply choose the sequence
$(\bJ_n)_{n \in \N}$ as constantly equal to the matrix filled with ones.
\end{remark}

The next Theorem is a concentration inequality for
$\normop{\bZ_t}$, the operator norm of $\bZ_t$.
Let $\inr{\bZ_{\bul,j}}_t$ (resp. $\inr{\bZ_{j,\bul}}_t$) be 
the matrices of predictable quadratic variations of the column (resp. row) vector $(\bZ_t)_{\bul,j}$  (resp. $(\bZ_t)_{\bul,j}$), and 
let us define
\begin{equation}
  \label{eq:def_sigma_Z_t}
  \sigma^2(\bZ_t) = \max \bigg( \Big\| \sum_{j=1}^n \inr{\bZ_{\bul,j}}_t 
  \Big\|_{\op}, \Big\| \sum_{j=1}^m \inr{\bZ_{j, \bul}}_t \Big\|_{\op} \bigg).
\end{equation}
Let us introduce also
\begin{equation}
  \label{eq:W_def}
  \bW_s = 
  \begin{bmatrix}
   \tT_s \tT_s^\top \circ  \left(\E(\bJ_1^{\odot 2}) \odot \bC^{\odot 2}_s \odot \blambda_s \right) & \bO \\ \bO &
\tT_s^\top \tT_s \circ \left( \E(\bJ_1^{\odot 2}) \odot \bC^{\odot 2}_s \odot \blambda_s \right) 
  \end{bmatrix},
\end{equation}
and
\begin{equation}
  \label{eq:b_t_def}
  b_t = J_{\max} \sup_{s \in [0, t]} \norm{\bC_s}_\infty 
 \max\big( \norm{\tT_s}_{\op; \infty},\norm{\tT^\top}_{\op; \infty}
 \big),
\end{equation}
and finally $\phi(x) = e^x - 1 - x$ for $x \in \R$.
\begin{theorem}
  \label{thm:concentration_counting}
  Let $\bZ_t$ be the $m \times n$ matrix martingale given by Equation~\eqref{eq:martingale} and suppose that Assumption~\ref{ass:M_d} holds. 
  Moreover, assume that
    \begin{equation}
    \label{eq:ass_thm1}
    \E \bigg[ \int_0^t
    \frac{\phi\big(3 J_{\max} \norm{\bC_s}_\infty 
    \max(\norm{\tT_s}_{\op; \infty},\norm{\tT_s^\top}_{\op; \infty})\big)}{J_{\max}^2 
    \norm{\bC_s}_\infty^2  \max(\norm{\tT_s}^2_{\op; \infty},\norm{\tT_s^\top}^2_{\op; \infty})} 
    (\bW_s)_{i, j} ds \bigg] < +\infty,
    \end{equation}
for any $1 \leq i, j \leq m + n$.
  Then, for any $t, x, b, v > 0$, the following holds\textup:
  \begin{equation*}
    \P \bigg[ \normop{\bZ_t} \geq \sqrt{2 v (x + \log (m + n))} +
    \frac{b (x + \log (m + n))}{3}, \quad \sigma^2(\bZ_t) \leq v,
    \quad b_t \leq b \bigg] \leq e^{-x},
  \end{equation*}
  where $\sigma^2(\bZ_t)$ is given by Equation~\eqref{eq:def_sigma_Z_t} and $b_t$ by Equation~\eqref{eq:b_t_def}.
  Moreover, we have
  \begin{equation*}
    \sigma^2(\bZ_t) = \lmax(\bV_t),
  \end{equation*}
  where
  \begin{equation}
    \label{eq:def_V_t_counting}
    \bV_t = \int_0^t \bW_s \; ds,
  \end{equation}
  with $\bW_s$ given by Equation~\eqref{eq:W_def}.
\end{theorem}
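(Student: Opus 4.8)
The plan is to combine the matrix Laplace-transform (Chernoff) method with a trace-exponential supermartingale obtained from stochastic calculus. First I would pass from the operator norm to a largest-eigenvalue problem via the Hermitian dilation
$$
\bS_t = \begin{bmatrix} \bO & \bZ_t \\ \bZ_t^\top & \bO \end{bmatrix},
$$
an $(m+n)\times(m+n)$ symmetric matrix martingale with $\lmax(\bS_t) = \normop{\bZ_t}$; the dimension $m+n$ of the dilation is precisely what produces the $\log(m+n)$ factor in the bound. The whole problem then reduces to controlling $\lmax(\bS_t)$ on the prescribed event.

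Next I would settle the identity $\sigma^2(\bZ_t)=\lmax(\bV_t)$, which is a direct computation of predictable quadratic covariations and also identifies $\bV_t$ as the predictable quadratic variation $\langle\bS\rangle_t$ of the dilation. Indeed $(\Delta\bS_s)^2$ is block diagonal with blocks $\Delta\bZ_s\,\Delta\bZ_s^\top$ and $\Delta\bZ_s^\top\,\Delta\bZ_s$, so $\langle\bS\rangle_t$ is block diagonal with upper block $\sum_{j=1}^n\inr{\bZ_{\bul,j}}_t$ and lower block $\sum_{j=1}^m\inr{\bZ_{j,\bul}}_t$. Writing $(\bZ_t)_{i,j}$ out from~\eqref{eq:martingale} and using that $[\vec\bM]_t$ is diagonal (Assumption~\ref{ass:M_d}, so distinct entries of $\bM$ never jump together), only the diagonal terms $d\langle(\bM)_{k,l}\rangle_s=\E((\bJ_1)_{k,l}^2)(\blambda_s)_{k,l}\,ds$ survive, and the identity $\sum_j(\tT_s)_{i,j;k,l}(\tT_s)_{i',j;k,l}=((\tT_s\tT_s^\top)_{\bul;k,l})_{i,i'}$ gives
$$
\sum_{j=1}^n\inr{\bZ_{\bul,j}}_t=\int_0^t \tT_s\tT_s^\top\circ\big(\E(\bJ_1^{\odot 2})\odot\bC_s^{\odot 2}\odot\blambda_s\big)\,ds,
$$
the upper-left block of $\bV_t$; the analogous row computation yields the lower-right block. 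Since $\bV_t$ is block diagonal with positive semidefinite blocks, $\sigma^2(\bZ_t)$ is the maximum of the two block operator norms, which equals $\lmax(\bV_t)$.

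The heart of the proof, and the step I expect to be the main obstacle, is a continuous-time trace-exponential supermartingale (the role of Proposition~\ref{prop:new-counting-supermart}). Writing $\beta_s=\Jmax\norm{\bC_s}_\infty\max(\norm{\tT_s}_{\op;\infty},\norm{\tT_s^\top}_{\op;\infty})$ so that $b_t=\sup_{s\le t}\beta_s$, I would show that for each $\theta>0$ the process
$$
Y_t^\theta = \tr\exp\Big(\theta\bS_t - \int_0^t \frac{\phi(\theta \beta_s)}{\beta_s^2}\,\bW_s\,ds\Big)
$$
is a supermartingale with $Y_0^\theta=m+n$. Establishing this requires Itô's formula for the matrix exponential along the purely discontinuous martingale $\bS_t$ and showing the predictable drift is nonpositive. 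Because $\bS_t$ and its jumps do not commute, the jump contribution cannot be diagonalized directly: this is exactly where Lieb's concavity theorem for the trace exponential enters, allowing the compensator of $\tr\exp(\cdots)$ to be dominated by the scalar-weighted matrix correction $\frac{\phi(\theta\beta_s)}{\beta_s^2}\bW_s$, with $|(\bJ_1)_{k,l}|\le\Jmax$ and the definition of $\beta_s$ generating the $\phi(\theta\beta_s)/\beta_s^2$ factor. Assumption~\eqref{eq:ass_thm1} guarantees finiteness of the relevant compensator integrals over the range of $\theta$ used, so the a priori local supermartingale is genuine and $\E Y_t^\theta \le m+n$.

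Granting this, I would conclude by a Chernoff argument restricted to the event. On $\{b_t\le b\}$ the monotonicity of $x\mapsto\phi(x)/x^2$ gives $\int_0^t\frac{\phi(\theta\beta_s)}{\beta_s^2}\bW_s\,ds \mleq \frac{\phi(\theta b)}{b^2}\bV_t$, hence $\theta\bS_t-\int_0^t(\cdots)ds \mgeq \theta\bS_t-\frac{\phi(\theta b)}{b^2}\bV_t$. Using monotonicity of $\tr\exp$ under the p.s.d. order, $\tr\exp\ge\exp\circ\lmax$, Weyl's inequality $\lmax(\bA-\bB)\ge\lmax(\bA)-\lmax(\bB)$, and $\lmax(\bV_t)=\sigma^2(\bZ_t)\le v$, one gets on the bad event that $Y_t^\theta \ge \exp\big(\theta r-\frac{\phi(\theta b)}{b^2}v\big)$, so by Markov's inequality
$$
\P\big[\normop{\bZ_t}\ge r,\ \sigma^2(\bZ_t)\le v,\ b_t\le b\big]\le(m+n)\exp\Big(-\theta r+\frac{\phi(\theta b)}{b^2}v\Big).
$$
The final step is the routine scalar Bernstein optimization over $\theta$: using $\phi(x)\le \frac{x^2/2}{1-x/3}$ and choosing $\theta$ appropriately turns the exponent into $-x$ precisely when $r=\sqrt{2v(x+\log(m+n))}+\frac{b(x+\log(m+n))}{3}$, which is the origin of the constant $1/3$ and yields the claimed bound $e^{-x}$.
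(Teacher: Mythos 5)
Your overall architecture coincides with the paper's: symmetric dilation to reduce $\normop{\bZ_t}$ to a largest-eigenvalue problem, a trace-exponential supermartingale with a $\phi$-weighted compensator correction, a Chernoff bound restricted to the event $\{\sigma^2(\bZ_t)\le v,\ b_t\le b\}$, the Bernstein optimization via $\phi(x)\le \frac{x^2/2}{1-x/3}$, and the block-diagonal covariation computation identifying $\sigma^2(\bZ_t)=\lmax(\bV_t)$. Those parts are correct and essentially identical to the paper's proof (the paper routes the Chernoff step through Lemma~\ref{lem:deviation} rather than Markov's inequality directly, an immaterial difference).

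The gap is at the step you yourself single out as the heart of the matter. You assert that the supermartingale property of $Y^\theta_t$ follows from Lieb's concavity theorem, but you give no argument, and Lieb's theorem, as stated, cannot do this job. Lieb's result controls conditional expectations of $\tr\exp(\bH+\bX)$ across a discrete time step; to use it for a \cadlag\ jump martingale you would need a discretization-and-limit argument that you do not sketch. More importantly, Lieb cannot by itself ``generate the $\phi(\theta\beta_s)/\beta_s^2$ factor'': that factor comes from an explicit computation of the compensator of $\sum_{s\le t}\big(e^{\theta\Delta\sS(\bZ_s)}-\theta\Delta\sS(\bZ_s)-\bI\big)$, which requires (a) the no-simultaneous-jump structure of Assumption~\ref{ass:M_d}, so that all powers collapse as $(\Delta\bZ_s\Delta\bZ_s^\top)^k=(\tT_s\tT_s^\top)^k\circ(\bC_s\odot\Delta\bM_s)^{\odot 2k}$; (b) a semidefinite bound on the odd powers of the dilation (Lemma~\ref{lem:boundSodd}); and (c) resummation of the exponential series into $\phi$ using $|(\bJ_1)_{i,j}|\le \Jmax$. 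This is Proposition~\ref{prop:utmartingale}, the real technical content of the theorem, and your proposal leaves it entirely unproved. The paper's own route to the supermartingale property (Proposition~\ref{prop:new-counting-supermart}) is It\^o's formula for $\tr\exp$ combined with the Golden--Thompson inequality applied to infinitesimal increments; indeed the paper stresses (Section~\ref{sec:discussion}) that a notable feature of the continuous-time setting is that Golden--Thompson becomes lossless in the infinitesimal limit, so that Lieb's theorem is not needed at all. Your appeal to Lieb both misses this point and, as written, does not constitute a proof of the key estimate.
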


This theorem is proved in Appendix \ref{sec:proof_of_theorem_1}. It provides a first non-commutative version of a concentration inequality for continuous time matrix martingales, in the purely discontinuous case. 
This theorem can be understood as the generalization to continuous time martingales of a Freedman inequality for (discrete time) matrix martingales established in~\cite{tropp2011freedman}.

% \begin{reptheorem}{thm:concentration_counting}
% \label{th:theorem1prime}
% 	Let $\bZ_t$ be the $m \times n$ martingale considered in 
%   Theorem~\ref{thm:concentration_counting}. 
% 	Let $b_t$ as defined in Eq. \eqref{eq:b_t_def} and let
%    $\inr{\bZ_{\bul,j}}_t$ (resp. $\inr{\bZ_{j,\bul}}_t$) be 
% the matrices of predictable quadratic variations of the column (resp. row) vector $(\bZ_t)_{\bul,j}$  (resp. $(\bZ_t)_{\bul,j}$) . Let us define
% 	\begin{equation}
%     \sigma^2(\bZ_t) = \max \left( \normop{\sum_{j=1}^n \inr{\bZ_{\bul,j}}_t}, \normop{\sum_{j=1}^m \inr{\bZ_{j,\bul}}_t} \right)
% 	\end{equation}
% Then we have, for any $t,z,b,v >0$,
% \begin{equation}
%  \P \bigg[ \normop{\bZ_t} \geq z, \; b_t \leq b \; \; \mathbf{and} \; \; \sigma^2(\bZ_t) \leq v \bigg] \leq (m+n) e^{-\frac{z^2}{2 v+z b /3}}
% \end{equation}

% \end{reptheorem}

Let us notice that the two terms involved in the definition~\eqref{eq:def_sigma_Z_t} of $\sigma^2(\bZ_t)$ are precisely the matrices of predictable quadratic variations of the entries of respectively $\bZ_t \bZ_t^\top$ and $\bZ_t^\top \bZ_t$ in full agreement 
with the form provided in the discrete 
case~\cite{tropp2011freedman}.
Moreover, if $\tT_s$, $\blambda_s$ and $\bC_s$ are deterministic, we can actually write
\begin{equation}
  \label{eq:def_sigma_Z_t_deter}
\sigma^2 (\bZ_t) = \max \Big( \normop{\E(\bZ_t \bZ_t^\top)}, \normop{ \E(\bZ_t^\top \bZ_t)} \Big).
\end{equation}
This term has the same shape as the variance term from Bernstein 
inequality established for random series of bounded matrices $\bZ_n = \sum_k \bS_k$ as e.g., in~\cite{tropp2012user}.
This illustrates the fact that Theorem~\ref{thm:concentration_counting} extends former results for discrete series of random matrices to continuous time matrix martingales.
A detailed discussion and comparison with literature is given in Section~\ref{sec:Applications} below.

Note that, since $\phi$ is an increasing function,~\eqref{eq:ass_thm1} is satisfied whenever $\bV_t$ has finite expectation and both $\norm{\bC_s}_\infty$, $\norm{\tT_s}_{\op; \infty}$ and $\norm{\tT_s^\top}_{\op; \infty}$ are bounded a.s. by some fixed constant.
In the scalar case ($m = n = p = q = 1$), the assumption required in Equation~\eqref{eq:ass_thm1} becomes
\begin{equation*}
  \E \Big[ \int_0^t e^{3 |C_s|} \lambda_s ds \Big] < +\infty,
\end{equation*}
where matrices $\bA_t$, $\bB_t$ and tensor $\tT_t$ are scalars equal to one, and $\bC_t = C_t$ is scalar. 
This matches the standard assumption for an exponential deviation of the scalar martingale $Z_t = \bZ_t$, see for instance~\cite{bremaud1981point}.

\subsection{Concentration inequality for continuous matrix martingales}
\label{sec:continuous_martingale}

In this section, we study the matrix-martingale $\bZ_t$ given by~\eqref{eq:martingale} when it is continuous. 
This mainly amounts to consider situations where the compensated counting processes are replaced by Brownian motions. 
More specifically, we will suppose that $\bM_t$ satisfies the following.
\begin{assumption}
  \label{ass:M_c}
  Assume that $\{ \bM_t\}$ is a matrix of independent standard Brownian motions.
  This notably implies that its entry-wise predictable quadratic variation matrix reads 
  \begin{equation*}
    \inr{\bM}_t  =  t \bI.
  \end{equation*}
\end{assumption}

In this context, we can prove the analog of Theorem~\ref{thm:concentration_counting}, i.e, a Freedman concentration inequality for $\normop{\bZ_t}$, the operator norm of $\bZ_t$.
Thus, following the same lines as in the previous section, let $\sigma^2(\bZ_t)$ be defined by Equation~\eqref{eq:def_sigma_Z_t} and let us consider for following matrix:
\begin{equation}
  \label{eq:W_def_cont}
  \bW_t = \begin{bmatrix}
    \tT_t \tT_t^\top \circ \bC^{\odot 2}_t   & \bO \\ \bO &
    \tT_t^\top \tT_t \circ \bC^{\odot 2}_t
  \end{bmatrix},
\end{equation}
which corresponds to the previous definition \eqref{eq:W_def} where the sequence $(\bJ_n)$ and the process $\blambda_t$ are replaced by the constant matrix with all entries equal to one. We have the following.

\begin{theorem}
  \label{thm:concentration_continuous}
  Let $\bZ_t$ be given by~\eqref{eq:martingale} and suppose that
  Assumption~\ref{ass:M_c} holds. 
  Then, the following holds\textup:
  \begin{equation*}
    \P \bigg[ \normop{\bZ_t} \geq \sqrt{2v(x+\log(m+n))} ~,~\sigma^2( \bZ_t) \le v \bigg] \leq e^{-x}
  \end{equation*}
   for any $v, x > 0$, where $\sigma^2(\bZ_t)$ is defined in \eqref{eq:def_sigma_Z_t}. 
   Moreover, we have
  \begin{equation*}
  \sigma^2(\bZ_t) = \lmax(\bV_t),
  \end{equation*}
  where $\bV_t$ is given by
  \begin{equation}
  \label{eq:def_V_t_continuous}
  \bV_t = \int_0^t \bW_s \; ds,
  \end{equation}
  with $\bW_s$ given by Equation~\eqref{eq:W_def_cont}.  
\end{theorem}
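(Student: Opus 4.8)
The plan is to reduce the bound on $\normop{\bZ_t}$ to a largest-eigenvalue bound via Hermitian dilation, and then to run a matrix Chernoff argument whose cumulant is controlled by a trace-exponential supermartingale obtained from It\^o's formula. First I would introduce the symmetric $(m+n)\times(m+n)$ continuous matrix martingale $\bH_t = \begin{bmatrix}\bO & \bZ_t\\ \bZ_t^\top & \bO\end{bmatrix}$, for which $\lmax(\bH_t) = \normop{\bZ_t}$. Writing the noise as $d\bH_s = \sum_{k,l}\bG^{(k,l)}_s\,d(\bM_s)_{k,l}$, where $\bG^{(k,l)}_s$ is the symmetric matrix whose upper-right block is $(\bC_s)_{k,l}\,(\tT_s)_{\bul;k,l}$, Assumption~\ref{ass:M_c} (independence and $\inr{\bM}_s = s\bI$) gives, after a short computation, $\sum_{k,l}(\bG^{(k,l)}_s)^2 = \bW_s$ with $\bW_s$ as in~\eqref{eq:W_def_cont}. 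Integrating yields $\bV_t = \int_0^t\bW_s\,ds$, whose two diagonal blocks are exactly $\sum_{j}\inr{\bZ_{\bul,j}}_t$ and $\sum_{j}\inr{\bZ_{j,\bul}}_t$; since the largest eigenvalue of a block-diagonal positive semidefinite matrix is the maximum of the operator norms of its blocks, this already proves the identity $\sigma^2(\bZ_t) = \lmax(\bV_t)$ of the ``Moreover'' claim.

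Next, for a fixed $\theta>0$ I would prove that $S_t = \tr\exp(\bA_t)$ with $\bA_t = \theta\bH_t - \tfrac{\theta^2}{2}\bV_t$ is a supermartingale. Applying It\^o's formula to $\tr e^{(\cdot)}$ along the continuous semimartingale $\bA_t$, the first-order term is $\tr(e^{\bA_t}\,d\bA_t)$ --- the identity $D\,\tr e^{\bA}[\bU] = \tr(e^{\bA}\bU)$ holds for every $\bU$ irrespective of commutativity, which is precisely why the trace is the right functional here. Its finite-variation part contributes the drift $-\tfrac{\theta^2}{2}\tr(e^{\bA_t}\bW_t)\,dt$, while its martingale part $\theta\,\tr(e^{\bA_t}\,d\bH_t)$ is a local martingale. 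The It\^o second-order term equals $\tfrac{\theta^2}{2}\sum_{k,l}D^2\,\tr e^{\bA_t}[\bG^{(k,l)}_t,\bG^{(k,l)}_t]\,dt$, and the crux is to bound it using $D^2\,\tr e^{\bA}[\bG,\bG]\le\tr(e^{\bA}\bG^2)$; summed over $(k,l)$ this is at most $\tfrac{\theta^2}{2}\tr(e^{\bA_t}\bW_t)\,dt$, which cancels the drift exactly. Hence $S_t$ is a nonnegative local supermartingale, so a genuine supermartingale with $\E[S_t]\le S_0 = m+n$. I expect this step to be the main obstacle: it is the continuous-time counterpart of Proposition~\ref{prop:new-counting-supermart}, and the inequality $D^2\,\tr e^{\bA}[\bG,\bG]\le\tr(e^{\bA}\bG^2)$ --- provable from the representation $D^2\,\tr e^{\bA}[\bG,\bG] = \int_0^1\tr(e^{s\bA}\bG e^{(1-s)\bA}\bG)\,ds$ together with convexity of $\exp$ in the eigenbasis of $\bA$ --- is the non-commutative heart of the argument, and it is also what makes the Gaussian cumulant come out as the clean quadratic $\theta^2/2$.

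Finally I would combine the supermartingale with the variance constraint. On the event $\{\sigma^2(\bZ_t)\le v\}$ we have $\bV_t\mleq v\bI$, so $\bA_t\mgeq\theta\bH_t - \tfrac{\theta^2}{2}v\bI$, and monotonicity of the trace exponential gives $S_t\ge e^{-\theta^2 v/2}\tr e^{\theta\bH_t}\ge e^{-\theta^2 v/2}e^{\theta\lmax(\bH_t)} = e^{-\theta^2 v/2}e^{\theta\normop{\bZ_t}}$. Therefore, on $\{\normop{\bZ_t}\ge r,\ \sigma^2(\bZ_t)\le v\}$ one has $S_t\ge e^{\theta r - \theta^2 v/2}$, and taking expectations yields $\P[\normop{\bZ_t}\ge r,\ \sigma^2(\bZ_t)\le v]\le(m+n)\,e^{-\theta r + \theta^2 v/2}$. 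Optimizing over $\theta>0$ at $\theta = r/v$ produces the sub-Gaussian bound $\exp(-r^2/(2v)+\log(m+n))$, and the choice $r = \sqrt{2v(x+\log(m+n))}$ gives exactly $e^{-x}$, as claimed. Everything outside the second paragraph is either dilation bookkeeping or the classical scalar Chernoff optimization.
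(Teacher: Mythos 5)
Your proof is correct, and its skeleton --- symmetric dilation, a trace-exponential supermartingale built from It\^o's formula with the compensator $\tfrac{\theta^2}{2}\bV_t$, then monotonicity of $\tr\exp$, Markov's inequality and optimization in $\theta$ --- is the same as the paper's, which applies Proposition~\ref{prop:new-counting-supermart} with $\bY_t=\xi\sS(\bZ_t)$ (the jump term $\bU_t$ vanishing in the continuous case) and concludes via Lemma~\ref{lem:deviation}; your Chernoff computation and the paper's Lemma~\ref{lem:deviation} plus optimization over $\xi$ are interchangeable, and your block identities $\sum_{k,l}(\bG^{(k,l)}_s)^2=\bW_s$ and $\sigma^2(\bZ_t)=\lmax(\bV_t)$ reproduce the paper's computation of $\sum_{j}\inr{\bZ_{\bul,j}}_t$ and $\sum_{j}\inr{\bZ_{j,\bul}}_t$. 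The genuine difference lies in how the second-order It\^o term is treated, and there your version is the more careful one. The paper's Lemma~\ref{lem:ito-trace-exp} is derived by asserting that the Hessian of $\bX\mapsto\tr e^{\bX}$ equals $\bI\otimes e^{\bX}$, i.e.\ that the second-order term is exactly $\tfrac12\tr(e^{\bX}\bH^2)$; for non-commuting matrices this is not an identity --- the true second Fr\'echet derivative is $D^2\tr e^{\bX}[\bH,\bH]=\int_0^1\tr\big(e^{s\bX}\bH e^{(1-s)\bX}\bH\big)\,ds$, which is only bounded \emph{above} by $\tr(e^{\bX}\bH^2)$, by the convexity argument in the eigenbasis that you invoke. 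You use precisely this inequality, in the direction that still cancels the drift, which is all the supermartingale conclusion requires; the paper's proof of Proposition~\ref{prop:new-counting-supermart} subtracts the overestimate and therefore also survives this correction, but its It\^o lemma as stated is an upper bound rather than an equality. What the paper's route buys is generality: Proposition~\ref{prop:new-counting-supermart} simultaneously handles the purely discontinuous case of Theorem~\ref{thm:concentration_counting}, with Golden--Thompson controlling the jumps. What your route buys is a shorter, self-contained argument for the continuous case that makes the non-commutative inequality --- the actual heart of the matter --- explicit rather than hidden inside an overstated Hessian formula.
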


We can remark that the above concentration inequality corresponds exactly to the result obtained in Theorem~\ref{thm:concentration_counting} for purely discontinuous martingales, if one sets $b_t = 0$. 
The concentration obtained here is in the ``Gaussian'' regime: $\bM_t$ is a Brownian motion, which leads to sub-Gaussian tails for $\bZ_t$. 
This is to be contrasted with Theorem~\ref{thm:concentration_counting}, which is in a ``Poisson'' regime: the tails contains both sub-Gaussian and sub-exponential terms for $\bZ_t$ in this case.

\subsection{Discussion}
\label{sec:discussion}

The concentration inequalities established for the two families of continuous-time matrix martingales considered above have the same form as the Freedman inequality obtained
in the discrete-time case~\cite{oliveira2010sums, tropp2011freedman}. 
In the case of deterministic functions $\tT_s$ and $\bC_s$ a direct consequence of Theorems~\ref{thm:concentration_counting} and~\ref{thm:concentration_continuous} is
\begin{equation}
\label{eq:mean_bound}
   \E \normop{\bZ_t} \leq \sigma(\bZ_t) \sqrt{2 \log(n+m)} + \frac{b_t \log(n+m)}{3},
\end{equation}
where $\sigma(\bZ_t)$ is defined by~\eqref{eq:def_sigma_Z_t_deter} and $b_t$ by~\eqref{eq:b_t_def}, with $b_t = 0$ if Assumption \ref{ass:M_c} holds.

By considering a piecewise constant tensor $\tT_s = \sum_{k=1}^n \tT_k \mathbbm{1}_{]k-1,k]}(s)$, where $\mathbbm{1}_{]k-1,k]}(t)$ stands for the indicator function of the interval $]k-1,k]$, $\bZ_t$ reduces to a discrete sum of random matrices $\bZ_n = \sum_{k=1}^n \bS_k$
with $\bS_k  = \tT_k \circ \int_{k-1}^{k} \bC_t \odot d\bM_t$. 
In this very particular case, one recovers exactly the results obtained by Tropp~\cite{tropp2012user} in this context, with a variance term given by
\begin{equation}
\label{eq:vartropp}
\sigma^2(\bZ_n) =  \max \bigg( \Big\| \sum_k \E(\bS_k \bS_k^\top) 
\Big\|_{\op}, \Big\| \sum_k\E(\bS_k^\top \bS_k) \Big\|_{\op} \bigg).
\end{equation}
Let us mention that, in the context of random series of matrices, a first tail bound for the norm was
provided by Ahlswede and Winter \cite{ahlswede2002strong}. 
These authors established a concentration inequality involving the variance term
\begin{equation*}
\sigma^2_{AW}(\bZ_n) = \max\Big( \sum_k \E \norm{ \bS_k \bS^\top_k}_{\op},  
\sum_k \E \norm{ \bS^\top_k \bS_k}_{\op} \Big), 
\end{equation*}
which is greater than the expression in Equation~\eqref{eq:vartropp}.
Ahlswede and Winter approach is based on the bounding of the matrix moment generating function $\xi \mapsto \E \tr e^{\xi \bZ_n}$  by iterating Golden-Thomson inequalities 
(which states that $\tr e^{\bA + \bB} \leq \tr e^{\bA} e^{\bB}$ for any symmetric matrices $\bA$ and~$\bB$). 
The improvement of $\sigma^2_{AW}(\bZ_n)$ to $\sigma^2(\bZ_n)$ obtained in~\cite{tropp2012user} is based on a powerful result by Lieb~\cite{lieb1973convex}, which says that $\bX \mapsto \tr e^{\bA + \log \bX}$ is concave over the SDP (semidefinite positive) cone, for any matrix $\bA$.

A surprising aspect of our results concerning continuous time martingales is that, as a by-product, they allow to recover previous sharp bounds without the use of the Lieb result.
The infinitesimal approach introduced in this paper allows one,
through It\^o's Lemma (see Appendix~\ref{sec:ito}), to bound 
$\E \tr \exp(\bZ_t)$ quite easily since it
can be explicitly written as $\E \int_0^t d (\tr \exp(\bZ_t))$.

As far as the sharpness of our results is concerned, better bounds than \eqref{eq:mean_bound}
can be manifestly obtained in some very specific cases. 
Indeed, it is well-known that for $n \times n$ matrices of symmetric  
i.i.d. Gaussian random variables (GOE ensemble), the expectation of the 
largest eigenvalue is of order $\sqrt{n}$. 
This result has been extended to more general
matrices of i.i.d. random variables as, e.g., in the work of Seginer\cite{seginer2000} or Latala~\cite{latala2005} where bounds without the $\sqrt{\log n}$ factor are obtained. 

However, for the general case considered in this paper, our results can be considered as being sharp since they match inequalities from~\cite{tropp2012user} on several important particular cases.
We develop some of these cases in Section~\ref{sec:Applications} below.
Concerning the extra $\log (m+n)$ factor, a simple example is the case of a diagonal matrix of i.i.d. standard Gaussian variables.
In that case the largest eigenvalue is simply the maximum of $m + n$, whose expectation
is well-known to scale as $\sqrt{\log(m + n)}$. 
We refer the reader to the discussion in~\cite{bandeiraMit,tropp2012user} for further details.

\section{Some specific examples}
\label{sec:Applications}

In this section we provide some examples of applications of Theorems~\ref{thm:concentration_counting} and 
\ref{thm:concentration_continuous} and discuss their relationship with some former works.
Further generalizations in an even more general context and application to statistical 
problems are then briefly presented.

\subsection{The martingale $\bZ_t = \int_0^t \bA_s (\bC_s \odot d\bM_s) \bB_s$}

Let $\bA_s$ and $\bB_s$ two matrix-valued processes of bounded predictable functions of dimensions respectively $m \times p$ and $q \times n$. 
Let us suppose that $(\tT_s)_{i,j,k,l} = (\bA_s)_{i,k} (\bB_s)_{l,j}$.
This corresponds to the situation where the matrix martingale $\bZ_t$ can be written as
\begin{equation}
\label{eq:mart_AB}
\bZ_t = \int_0^t \bA_s (\bC_s\odot d \bM_t) \bB_t.
\end{equation}
In that case, the entry $(i,j)$ of the matrix $\bW_s$ defined by~\eqref{eq:W_def} reads, 
when $1 \leq i,j \leq m$:
\begin{align*}
  (\bW_s)_{i,j} = & \sum_{a=1}^n \sum_{k=1}^p \sum_{l=1}^q (\tT_s)_{i,a,k,l}(\tT_s)_{j,a,k,l} 
  (\E(\bJ_1^{\odot 2}) \odot \bC^{\odot 2}_s \odot \blambda_s))_{k,l} \\
  = & \sum_{a=1}^n \sum_{k=1}^p \sum_{l=1}^q (\bA_s)_{i,k} (\bB_s)_{l,a}^2 (\bA_s)_{j,k} 
  (\E(\bJ_1^{\odot 2}) \odot \bC^{\odot 2}_s \odot \blambda_s))_{k,l}.
\end{align*}
In the same way, for $1 \leq i,j \leq n$, one has:
$$
  (\bW_s)_{i+m,j+m} = \sum_{a=1}^m \sum_{k=1}^p \sum_{l=1}^q (\bB_s)_{k,i} (\bA_s)_{a,l}^2 (\bB_s)_{k,j} (\E(\bJ_1^{\odot 2}) \odot \bC^{\odot 2}_s \odot \blambda_s))_{l,k} \; .
$$
Then, Theorem~\ref{thm:concentration_counting} leads to the following corollary, that follows from easy computations.
\begin{proposition}
\label{prop:AMB}
If $\bZ_t$ is given by~\eqref{eq:mart_AB}, the matrix $\bW_s$ defined by~\eqref{eq:W_def} 
can be written  as
\begin{equation}
\label{eq:W_def_old}
\bW_t = 
\bP_t^\top
\begin{bmatrix}
\diag[(\E(\bJ_1^{\odot 2}) \odot \bC^{\odot 2}_t \odot \blambda_t) \diag[\bB_t \bB_t^\top] \dsone] & \bO \\ \bO &
\diag[(\E(\bJ_1^{\odot 2}) \odot \bC^{\odot 2}_t \odot \blambda_t)^\top \diag[\bA_t^\top \bA_t] \dsone \big] 
\end{bmatrix}
\bP_t
\end{equation}
with
\begin{equation*}
\bP_t =
\begin{bmatrix}
\bA_t^\top & \bs 0 \\ \bs 0 &
\bB_t
\end{bmatrix}.
\end{equation*}
Furthermore, we have also that~\eqref{eq:b_t_def} writes
\begin{equation}
\label{eq:b_t_old}
b_t = J_{\max} \sup_{s \in [0, t]}  \norm{\bA_s}_{\infty, 2} 
\norm{\bB_s}_{2,\infty} \norm{\bC_s}_\infty.
\end{equation}
The same expression holds for the matrix $\bW_t$ of Theorem~\ref{thm:concentration_continuous}, 
provided that one takes $(\blambda_t)_{i,j} = (\bJ_1)_{i,j} = 1$.
\end{proposition}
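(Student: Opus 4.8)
The plan is to prove both assertions of Proposition~\ref{prop:AMB} by direct computation, starting from the two entry-wise formulas for $\bW_s$ already displayed just before the statement. Since $\bW_s$ is block diagonal by~\eqref{eq:W_def}, it suffices to factor each diagonal block separately; the off-diagonal blocks vanish on both sides, which is automatically consistent with the block-diagonal conjugation $\bP_t^\top(\cdot)\bP_t$, so no extra work is needed there. Throughout I would keep careful track of the index ranges $i\in[m]$, $j\in[n]$, $k\in[p]$, $l\in[q]$, since this is really where the whole computation lives.

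For the upper-left block I would use $\sum_{a=1}^n (\bB_s)_{l,a}^2 = (\bB_s\bB_s^\top)_{l,l}$ to rewrite the displayed entry as
\[
(\bW_s)_{i,j} = \sum_{k=1}^p (\bA_s)_{i,k}(\bA_s)_{j,k}\Big(\sum_{l=1}^q (\E(\bJ_1^{\odot 2})\odot\bC^{\odot 2}_s\odot\blambda_s)_{k,l}\,(\bB_s\bB_s^\top)_{l,l}\Big).
\]
The inner parenthesis is precisely the $k$-th entry of the vector $(\E(\bJ_1^{\odot 2})\odot\bC^{\odot 2}_s\odot\blambda_s)\,\diag[\bB_s\bB_s^\top]\,\dsone$, so writing $\bD^{(1)}_s$ for the diagonal matrix of these entries gives $(\bW_s)_{i,j}=(\bA_s\,\bD^{(1)}_s\,\bA_s^\top)_{i,j}$. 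The lower-right block is symmetric: using $\sum_a(\bA_s)_{a,k}^2=(\bA_s^\top\bA_s)_{k,k}$ one finds $(\bW_s)_{i+m,j+m}=(\bB_s^\top\,\bD^{(2)}_s\,\bB_s)_{i,j}$ with $\bD^{(2)}_s=\diag[(\E(\bJ_1^{\odot 2})\odot\bC^{\odot 2}_s\odot\blambda_s)^\top\diag[\bA_s^\top\bA_s]\,\dsone]$. Assembling the two blocks and reading off $\bP_t^\top=\begin{bmatrix}\bA_t&\bs 0\\ \bs 0&\bB_t^\top\end{bmatrix}$ so that the upper-left and lower-right blocks of $\bP_t^\top\!\begin{bmatrix}\bD^{(1)}_t&\bO\\\bO&\bD^{(2)}_t\end{bmatrix}\!\bP_t$ equal $\bA_t\bD^{(1)}_t\bA_t^\top$ and $\bB_t^\top\bD^{(2)}_t\bB_t$ yields~\eqref{eq:W_def_old}; the dimensions $(m+n)\times(p+q)$ of $\bP_t^\top$ confirm the sizes line up.

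For $b_t$ the key observation is that under $(\tT_s)_{i,j;k,l}=(\bA_s)_{i,k}(\bB_s)_{l,j}$ each slice $(\tT_s)_{\bul;k,l}$ is the rank-one matrix $(\bA_s)_{\bul,k}(\bB_s)_{l,\bul}$, whose operator norm equals $\norm{(\bA_s)_{\bul,k}}_2\,\norm{(\bB_s)_{l,\bul}}_2$. Maximizing over $k,l$ and invoking the definitions of the mixed norms gives $\norm{\tT_s}_{\op;\infty}=\norm{\bA_s}_{\infty,2}\,\norm{\bB_s}_{2,\infty}$; since $(\tT_s^\top)_{\bul;k,l}$ is the transpose of the same rank-one slice, $\norm{\tT_s^\top}_{\op;\infty}$ takes the identical value, so the $\max$ in~\eqref{eq:b_t_def} is redundant and substituting into~\eqref{eq:b_t_def} delivers~\eqref{eq:b_t_old}.

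I do not anticipate a genuine obstacle: as the statement notes, these are elementary computations, and the only real care needed is the bookkeeping that makes the contraction over the ``$n$'' index collapse to $\bB_s\bB_s^\top$ while the contraction over the ``$m$'' index collapses to $\bA_s^\top\bA_s$, together with the recognition of the rank-one slice structure. The continuous-time version is then immediate: replacing $\E(\bJ_1^{\odot 2})$ and $\blambda_s$ by the all-ones matrix, as in~\eqref{eq:W_def_cont}, leaves every step above unchanged.
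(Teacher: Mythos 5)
Your proof is correct and takes essentially the same route as the paper, which simply displays the entry-wise formulas for the two diagonal blocks just before the proposition and leaves the rest as ``easy computations.'' The details you supply --- collapsing the inner sums to $(\bB_s\bB_s^\top)_{l,l}$ and $(\bA_s^\top\bA_s)_{k,k}$, identifying the diagonal middle factors, and the rank-one slice argument giving $\norm{\tT_s}_{\op;\infty}=\norm{\tT_s^\top}_{\op;\infty}=\norm{\bA_s}_{\infty,2}\norm{\bB_s}_{2,\infty}$ so that the maximum in~\eqref{eq:b_t_def} is redundant --- are exactly the computations the paper intends.
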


The particular structure~\eqref{eq:mart_AB} enables the study of several particular cases, developed in the next sections.

\subsubsection{Counting processes}

An interesting example that fits the setting of purely discontinuous martingales is the situation where $\bM_t$ comes from the compensation of a matrix-valued process $\bN_t$, whose entries are counting processes.
In this example we can write $\bM_t = \bN_t = \bLambda_t$, where $\bLambda_t$ is the compensator 
of $\bN_t$.
We fix $\bA_t = \bI_p$ and $\bB_t = \bI_q$ for all $t$, so that $\bZ_t =
\int_0^t \bC_s \odot d \bM_s$. We obtain in this case 
\begin{equation*}
\bV_t = \int_0^t
\begin{bmatrix}
\diag\big[ (\bC_s^{\odot 2} \odot \blambda_s) \dsone \big] & \bO
\\ \bO & \diag\big[ (\bC_s^{\odot 2} \odot \blambda_s)^\top \dsone
\big]
\end{bmatrix} ds,
\end{equation*}
so the largest eigenvalue is easily computed as
\begin{equation*}
\lmax(\bV_t) = \Big\| \int_0^t \bC_s^{\odot 2} \odot \blambda_s ds
\Big\|_{1, \infty} \vee \Big\| \int_0^t \bC_s^{\odot 2} \odot \blambda_s ds
\Big\|_{\infty, 1},
\end{equation*}
and $b_t = \sup_{s \in [0, t]} \norm{\bC_s}_\infty$. This leads to the
following corollary.
\begin{corollary}
\label{cor:1}
	Let $\{ \bN_t \}$ be a $p \times q$ matrix whose entries
	$(\bN_t)_{i, j}$ are independent counting processes
	with intensities $(\blambda_t)_{i, j}$. Consider the matrix martingale
	$\bM_t = \bN_t - \bLambda_t$, where $\bLambda_t = \int_0^t
	\blambda_s ds$ and let $\{ \bC_t \}$ be a $p \times q$ bounded deterministic process. We have that
	\begin{align*}
	\Bignormop{\int_0^t \bC_s \odot d(\bN_t - \bLambda_t)} \leq
	&\sqrt{2 \Big( \Big\| \int_0^t \bC_s^{\odot 2} \odot \blambda_s ds
		\Big\|_{1, \infty} \vee \Big\| \int_0^t \bC_s^{\odot 2} \odot
		\blambda_s ds \Big\|_{\infty, 1} \Big) (x + \log(p + q))} \\
	& \quad \quad + \frac{\sup_{s \in [0, t]} \norm{\bC_s}_\infty (x +
		\log(p + q))}{3}
	\end{align*}
	holds with a probability larger than $1 - e^{-x}$.
\end{corollary}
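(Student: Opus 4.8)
The plan is to obtain Corollary~\ref{cor:1} as the special instance of Theorem~\ref{thm:concentration_counting} in which the factorized form~\eqref{eq:mart_AB} is taken with $\bA_s = \bI_p$ and $\bB_s = \bI_q$. This forces $m = p$ and $n = q$ and makes the driving tensor $(\tT_s)_{i,j;k,l} = \delta_{ik}\delta_{lj}$, so that~\eqref{eq:mart_AB} collapses to $\bZ_t = \int_0^t \bC_s \odot d\bM_s$ with $\bM_t = \bN_t - \bLambda_t$. Since the values of $\bV_t$, $\lmax(\bV_t)$ and $b_t$ in this configuration have already been computed in the lines preceding the statement, the remaining work is to confirm that the hypotheses of Theorem~\ref{thm:concentration_counting} hold and then to simplify its conclusion.

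First I would verify Assumption~\ref{ass:M_d}. The process $\bM_t = \bN_t - \bLambda_t$ is purely discontinuous, being the compensation of a matrix of counting processes; its entries are locally bounded because the $(\blambda_s)_{i,j}$ are locally integrable intensities; and since the coordinate processes $(\bN_t)_{i,j}$ are independent they almost surely share no jump time, so $[\vec\bM]_t$ is diagonal. The jump relation~\eqref{eq:defMdis} holds with $(\bJ_n)$ constantly equal to the all-ones matrix, as observed in the Remark after Assumption~\ref{ass:M_d}; consequently $\Jmax = 1$ and $\E(\bJ_1^{\odot 2})$ is the all-ones matrix, which is precisely the choice that turns~\eqref{eq:W_def} into the form used just before the corollary.

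Next I would check the integrability condition~\eqref{eq:ass_thm1}. For the present tensor one has $\tT_{\bul;k,l} = \bs e_k \bs e_l\T$, a rank-one $m \times n$ matrix, whence $\norm{\tT_s}_{\op;\infty} = \norm{\tT_s^\top}_{\op;\infty} = 1$ for every $s$. As $\bC_s$ is deterministic and bounded and $\phi$ is increasing, the integrand in~\eqref{eq:ass_thm1} is bounded by a constant times $(\bW_s)_{i,j}$, and $\int_0^t \bW_s\,ds = \bV_t$ has entries controlled by the finite quantity $\int_0^t \bC_s^{\odot 2}\odot\blambda_s\,ds$; this is the matrix counterpart of the scalar condition $\int_0^t e^{3|C_s|}\lambda_s\,ds < +\infty$ recorded after the theorem.

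Finally I would apply Theorem~\ref{thm:concentration_counting} and exploit determinism to remove its two auxiliary events, which is the only step carrying genuine (if minor) content. Because $\tT_s$, $\bC_s$ and $\blambda_s$ are deterministic, $\sigma^2(\bZ_t) = \lmax(\bV_t)$ and $b_t$ are deterministic numbers, so taking $v = \sigma^2(\bZ_t)$ and $b = b_t$ makes $\{\sigma^2(\bZ_t) \le v\}$ and $\{b_t \le b\}$ sure events and reduces the theorem to $\P[\normop{\bZ_t} \ge \sqrt{2\lmax(\bV_t)(x+\log(p+q))} + b_t(x+\log(p+q))/3] \le e^{-x}$. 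Substituting the already-derived $\lmax(\bV_t) = \big\| \int_0^t \bC_s^{\odot 2}\odot\blambda_s\,ds \big\|_{1,\infty} \vee \big\| \int_0^t \bC_s^{\odot 2}\odot\blambda_s\,ds \big\|_{\infty,1}$ and $b_t = \sup_{s\in[0,t]}\norm{\bC_s}_\infty$ gives the stated bound on the complementary event, with probability at least $1 - e^{-x}$. I do not expect any real obstacle here: beyond the determinism observation, the argument is pure specialization and bookkeeping.
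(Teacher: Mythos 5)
Your proposal is correct and takes essentially the same route as the paper: the paper also derives Corollary~\ref{cor:1} by specializing Theorem~\ref{thm:concentration_counting} through the factorized form of Proposition~\ref{prop:AMB} with $\bA_t = \bI_p$, $\bB_t = \bI_q$, the all-ones jump matrices (so $\Jmax = 1$), computing $\lmax(\bV_t)$ as the maximum row/column $\ell_1$ norm of $\int_0^t \bC_s^{\odot 2} \odot \blambda_s \, ds$ and $b_t = \sup_{s \in [0,t]} \norm{\bC_s}_\infty$, and exploiting determinism so that the events $\{\sigma^2(\bZ_t) \le v\}$ and $\{b_t \le b\}$ are sure. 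Your explicit verification of Assumption~\ref{ass:M_d} and of the integrability condition~\eqref{eq:ass_thm1} is, if anything, more careful than what the paper records.
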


Another interesting situation is when $\bA_s$ is of dimension $1 \times q$, $\bM_t$ is of dimension $q \times 1$, $\bC_s$ is the matrix of dimension $q \times 1$ will all entries equal to one, $\bB_t = 1$ for all $t$. 
In that case $\bZ_t$ is a scalar martingale denoted $Z_t$.
Consider $A_t^{(1)}, \ldots, A^{(q)}_t$  and $N^{(1)}_t, \ldots, N^{(q)}_t$ the $q$ components of respectively the vector $\bA_t^{\top}$ and the vector $\bN_t$. 
Along the same line $\lambda^{(1)}_t, \ldots, \lambda^{(q)}_t$ denotes their associated intensities.
We thus have
\begin{equation*}
  Z_t = \sum_{k=1}^q \int_0^t A^{(k)}_s dN^{(k)}_s,
\end{equation*}
which is a martingale considered in~\cite{hansen_reynaud_bouret_viroirard}.
In this case we have from Proposition~\ref{prop:AMB}:
\begin{equation*}
\bV_t = \int_0^t
\begin{bmatrix}
 \sum_{k=1}^q (A_s^{(k)})^2 \lambda^{(k)}_s & 0
\\ 0 &  \sum_{k=1}^q (A_s^{(k)})^2 \lambda^{(k)}_s
\end{bmatrix} ds,
\end{equation*}
which largest eigenvalue is simply $\int_0^t \sum_{k=1}^q (A_s^{(k)})^2 \lambda^{(k)}_s ds$.
Theorem~\ref{thm:concentration_counting} becomes in this particular case the following.
\begin{corollary}
	Let $(N^{(1)}_t, \ldots, N^{(q)}_t)$ be $q$ counting processes of intensities 
	$\lambda^{(1)}_t, \ldots, \lambda^{(q)}_t$. Let us consider the martingale
  \begin{equation*}
    Z_t = \sum_{k=1}^q \int_0^t A^{(k)}_s (dN^{(k)}_s - \lambda_s^{(k)} ds)
  \end{equation*}
	where $(A^{(k)})_{k=1,\ldots, q}$ are $q$ predictable functions.
    If one assumes that $b_t = \sup_{k,s\leq t} \norm{A^{(k)}_s} \leq 1$, 
    the following inequality
    \begin{equation*}
    \P \bigg( | Z_t| \geq \sqrt{2 x v} + \frac{x}{3}, \quad 
    \sum_{k=1}^q \int_0^t (A^{(k)}_s)^2 \lambda^{(k)}_s ds \leq v \bigg) 
    \leq 2 e^{-x}
    \end{equation*}
    holds for any $x,v >0$.
\end{corollary}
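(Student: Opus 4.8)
The plan is to read this statement as the scalar shadow ($m=n=1$) of Theorem~\ref{thm:concentration_counting}, but to carry out the exponential argument directly so as to keep the dimensional factor $m+n=2$ \emph{outside} the exponent — this is exactly what produces the prefactor $2$ and removes the $\log(m+n)$ term that would otherwise be folded into the deviation. First I would record the specialization: taking $\bA_s$ to be the row vector $(A_s^{(1)},\dots,A_s^{(q)})$, $\bB_s=1$, $\bC_s\equiv 1$ and $\bJ_n$ the all-ones matrix (so $J_{\max}=1$), Equation~\eqref{eq:mart_AB} becomes $Z_t=\sum_{k=1}^q\int_0^t A_s^{(k)}\,dM_s^{(k)}$ with $M^{(k)}=N^{(k)}-\Lambda^{(k)}$, and the computation preceding the statement gives $\sigma^2(\bZ_t)=\lmax(\bV_t)=\sum_{k=1}^q\int_0^t (A_s^{(k)})^2\lambda_s^{(k)}\,ds$ together with $b_t=\sup_{k,\,s\le t}|A_s^{(k)}|\le 1$ by hypothesis.

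Next I would invoke the scalar counterpart of the trace-exponential supermartingale of Proposition~\ref{prop:new-counting-supermart}. Since the $N^{(k)}$ do not jump simultaneously and have predictable intensities $\lambda^{(k)}$, for each $\theta>0$ the process
\[
\mathcal{E}_t^\theta=\exp\Big(\theta Z_t-\sum_{k=1}^q\int_0^t \phi\big(\theta A_s^{(k)}\big)\,\lambda_s^{(k)}\,ds\Big)
\]
is a nonnegative local martingale, hence a supermartingale with $\E\mathcal{E}_t^\theta\le\mathcal{E}_0^\theta=1$ (true integrability being supplied by the local boundedness of the integrands and the scalar form of~\eqref{eq:ass_thm1}). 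The key is then to control the compensator on the event of interest. Using $\phi(\theta a)\le \phi(\theta|a|)\le \tfrac12\theta^2 a^2/(1-\theta|a|/3)$ for $0<\theta|a|<3$ together with $|A_s^{(k)}|\le b_t\le 1$, I would bound, on the event $\{\sum_{k}\int_0^t (A_s^{(k)})^2\lambda_s^{(k)}\,ds\le v\}$,
\[
\sum_{k=1}^q\int_0^t \phi\big(\theta A_s^{(k)}\big)\,\lambda_s^{(k)}\,ds\le \frac{\theta^2 v}{2(1-\theta/3)},\qquad 0<\theta<3 .
\]

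Finally I would run the Chernoff argument one-sidedly. On $\{Z_t\ge a\}\cap\{\sum_k\int_0^t(A_s^{(k)})^2\lambda_s^{(k)}\,ds\le v\}$ the previous bound forces $\mathcal{E}_t^\theta\ge \exp\!\big(\theta a-\theta^2 v/(2(1-\theta/3))\big)$, so Markov's inequality and $\E\mathcal{E}_t^\theta\le 1$ give
\[
\P\Big(Z_t\ge a,\ \sum_{k=1}^q\int_0^t (A_s^{(k)})^2\lambda_s^{(k)}\,ds\le v\Big)\le \exp\Big(-\theta a+\frac{\theta^2 v}{2(1-\theta/3)}\Big).
\]
Optimizing the exponent over $\theta\in(0,3)$ is the classical Bernstein computation, whose Legendre transform yields probability $e^{-x}$ precisely at the deviation $a=\sqrt{2vx}+x/3$. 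Since $-Z_t$ is a martingale of the same type (replace each $A^{(k)}$ by $-A^{(k)}$, leaving both the variance term and $b_t$ unchanged), the identical bound holds for $-Z_t$; a union bound over the two signs then delivers the claimed $2e^{-x}$ for the two-sided event $\{|Z_t|\ge\sqrt{2vx}+x/3\}$.

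The one genuinely delicate point — inherited directly from the proof of Theorem~\ref{thm:concentration_counting} — is the simultaneous treatment of the random variance $\sum_k\int_0^t(A_s^{(k)})^2\lambda_s^{(k)}\,ds$ with the supermartingale: the compensator estimate is valid only on $\{\sum_k\int_0^t\cdots\le v,\ b_t\le 1\}$, so one must secure $\E\mathcal{E}^\theta_t\le 1$ (via localization by stopping times using local boundedness of the integrands) before intersecting with the deterministic variance event. Everything else is the scalar Bernstein optimization; the only conceptual content is the observation that for $m=n=1$ the Hermitian dilation has eigenvalues $\pm Z_t$, so its trace exponential equals $e^{Z_t}+e^{-Z_t}$, which is exactly why the dimension factor $2$ surfaces as a prefactor rather than inside a logarithm.
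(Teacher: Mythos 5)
Your proof is correct, and it reaches the statement by a genuinely different route than the paper. The paper treats this corollary as a pure specialization of Theorem~\ref{thm:concentration_counting}: taking $\bA_s=(A_s^{(1)},\ldots,A_s^{(q)})$, $\bB_s=1$, and $\bC_s$, $\bJ_n$ all-ones in Proposition~\ref{prop:AMB} yields $\lmax(\bV_t)=\sum_{k=1}^q\int_0^t (A_s^{(k)})^2\lambda_s^{(k)}\,ds$ and $b_t=\sup_{k,s\le t}|A_s^{(k)}|\le 1$, and the claimed inequality is then the $m=n=1$ case of the theorem once the $x\mapsto x+\log(m+n)$ reparametrization is unfolded back into the prefactor $(m+n)e^{-x}=2e^{-x}$ (this prefactor form appears as an intermediate display in the proof in Appendix~\ref{sec:proof_of_theorem_1}, together with $\lmax(\sS(Z_t))=|Z_t|$). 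You instead rebuild the inequality from scratch at the scalar level: the exponential supermartingale $\exp\big(\theta Z_t-\sum_k\int_0^t\phi(\theta A_s^{(k)})\lambda_s^{(k)}\,ds\big)$, which is the scalar shadow of Proposition~\ref{prop:new-counting-supermart} and is classical for compensated counting processes, then the same properties (i)--(iii) of $\phi$ used in Appendix~\ref{sec:proof_of_theorem_1}, a Chernoff bound, and a union bound over the two signs of $Z_t$. The two arguments run on the same engine, and your closing remark --- that the trace exponential of the dilation of a scalar equals $e^{Z_t}+e^{-Z_t}$ --- is precisely the dictionary between them: your two one-sided supermartingales are exactly the two eigenvalue contributions to the paper's matrix supermartingale $L_t$. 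What the paper's route buys is the message of Section~\ref{sec:Applications}, namely that the known scalar point-process inequality drops out of the matrix theorem with no additional stochastic calculus; what your route buys is a self-contained, elementary proof (no dilation, no trace exponential, no matrix compensator estimate such as Lemma~\ref{lem:boundSodd}) that makes transparent why the dimension enters as a prefactor $2$ rather than inside the deviation. One small simplification: the integrability caveat you flag is lighter than you suggest, since a nonnegative local martingale is automatically a supermartingale by Fatou's lemma, so $\E\,\mathcal{E}_t^\theta\le 1$ holds by localization alone and the scalar analogue of~\eqref{eq:ass_thm1} is not needed at that step.
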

This result exactly corresponds to the concentration inequality proved in \cite{hansen_reynaud_bouret_viroirard} in the context of statistical estimation
of point processes.

\subsubsection{``Static'' random matrices}

Theorems \ref{thm:concentration_counting} and \ref{thm:concentration_continuous} can be helpful
to study the norm of some specific random matrices.

Let us consider a $n \times m$ matrix $\bG = [g_{i, j}]$ of independent centered 
Gaussian random variables $g_{i, j}$with variance $c_{i,j}^2$.
This corresponds to the situation in Proposition~\ref{prop:AMB} when $t=1$, $
\bA_t = \bI_n$, $\bB_t = \bI_m$ and $(\bC_t)_{i,j} = (\bC)_{i,j} = c_{i,j}$.
The $(n + m) \times (n + m)$ matrix $\bW_t$ given by~\eqref{eq:W_def} writes in this case as the diagonal matrix with entries equal to the square $\ell^2$-norms of respectively rows and 
columns of $\bC$. In this setting, Theorem~\ref{thm:concentration_continuous} entails the following.
\begin{corollary}
	\label{cor:grm}
	Let $\bG$ be a $n \times m$ random matrix with independent entries $g_{i, j}$ that are centered Gaussian with variance $c_{i,j}^2$.
  Then,
	\begin{equation}
	\label{eq:gbt}
	\P \Big( \normop{\bG} \geq \sigma \sqrt{x + \log(n + m)} \Big) \leq e^{-x}
	\end{equation}
	with
	\begin{equation*}
  	\sigma^2 = \max\big(\norm{\bC}_{\infty, 2}, \norm{\bC}_{2, \infty}\big) =
    \max\bigg(\max_{i=1, \ldots, n} \sum_{j=1}^m c^2_{i, j}, \max_{j=1, \ldots, m} 
    \sum_{i=1}^n c^2_{i, j} \bigg).
	\end{equation*}
\end{corollary}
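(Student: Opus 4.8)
The plan is to realize the Gaussian matrix $\bG$ as the time-$1$ value of a continuous matrix martingale of the form~\eqref{eq:martingale} and then to specialize Theorem~\ref{thm:concentration_continuous}. Concretely, I would take $\bM_t$ to be an $n\times m$ matrix of independent standard Brownian motions, so that Assumption~\ref{ass:M_c} holds, set $\bA_t=\bI_n$ and $\bB_t=\bI_m$ for all $t$ --- equivalently the identity tensor $(\tT_s)_{i,j;k,l}=\delta_{ik}\delta_{lj}$ --- and let $\bC_t\equiv\bC$ be the constant matrix with entries $c_{i,j}$. The resulting martingale $\bZ_t=\int_0^t \bC\odot d\bM_s$ has entries $(\bZ_t)_{i,j}=c_{i,j}(\bM_t)_{i,j}$, whence $(\bZ_1)_{i,j}=c_{i,j}(\bM_1)_{i,j}$ are independent centered Gaussian variables with variance $c_{i,j}^2$. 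Thus $\bZ_1$ has the same law as $\bG$, and it suffices to control $\normop{\bZ_1}$ through the theorem.

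Next I would evaluate the variance proxy. Substituting the identity tensor and $\bC$ into~\eqref{eq:W_def_cont} --- equivalently, reading off Proposition~\ref{prop:AMB} with $\blambda_t$ and $\bJ_1$ set to the all-ones matrix and $\bA_t=\bI_n$, $\bB_t=\bI_m$ --- the two tensor contractions $\tT\tT^\top\circ\bC^{\odot 2}$ and $\tT^\top\tT\circ\bC^{\odot 2}$ collapse to diagonal matrices, so that
\begin{equation*}
  \bW_s=\begin{bmatrix} \diag\big[\bC^{\odot 2}\dsone\big] & \bO \\ \bO & \diag\big[(\bC^{\odot 2})^\top\dsone\big]\end{bmatrix},
\end{equation*}
a constant $(n+m)\times(n+m)$ block-diagonal matrix whose top block carries the row sums of squares $\sum_j c_{i,j}^2$ and whose bottom block carries the column sums of squares $\sum_i c_{i,j}^2$. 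Since $\bC$ is time-independent, $\bV_1=\int_0^1\bW_s\,ds=\bW$, and because $\bV_1$ is diagonal its largest eigenvalue is its largest diagonal entry. This gives $\sigma^2(\bZ_1)=\lmax(\bV_1)=\max\big(\max_i\sum_j c_{i,j}^2,\ \max_j\sum_i c_{i,j}^2\big)=\sigma^2$, exactly the quantity in the statement.

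To conclude, I note that all the coefficients here are deterministic, so $\sigma^2(\bZ_1)$ is a deterministic constant equal to $\sigma^2$. Hence the auxiliary event $\{\sigma^2(\bZ_1)\le v\}$ appearing in Theorem~\ref{thm:concentration_continuous} holds with probability one as soon as one takes $v=\sigma^2$, so that constraint simply drops out. Substituting $v=\sigma^2$ into the theorem then yields the announced deviation bound for $\normop{\bZ_1}=\normop{\bG}$.

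There is essentially no hard analytic obstacle: once the terminal-value identification $\bZ_1$ against $\bG$ is in place, the entire argument is a specialization of Theorem~\ref{thm:concentration_continuous}. The only point requiring a little care is the bookkeeping of the contractions $\tT\tT^\top\circ\bC^{\odot 2}$ and $\tT^\top\tT\circ\bC^{\odot 2}$ for the identity tensor, together with the observation that the resulting $\bV_1$ is diagonal, so that the $(n+m)$-dimensional eigenvalue problem degenerates into reading off the larger of the maximal row and column sums of squares.
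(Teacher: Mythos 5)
Your proof follows exactly the paper's own route: the paper, too, realizes $\bG$ as $\bZ_1=\int_0^1 \bC\odot d\bM_s$ with $\bA_t=\bI_n$, $\bB_t=\bI_m$, constant $\bC_t=\bC$ and Brownian $\bM_t$, reads off the diagonal matrix $\bW$ (row and column squared $\ell_2$-norms of $\bC$) as a special case of Proposition~\ref{prop:AMB}, and then invokes Theorem~\ref{thm:concentration_continuous} with the deterministic choice $v=\sigma^2$. The only caveat --- which you share with the paper --- is that Theorem~\ref{thm:concentration_continuous} actually yields the threshold $\sqrt{2\sigma^2(x+\log(n+m))}$, so the displayed bound~\eqref{eq:gbt} is missing a factor $\sqrt{2}$ (apparently a typo in the corollary); your final substitution step inherits this discrepancy rather than resolving it.
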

In the case of standard Gaussian random variables, i.e., $b_{i,j}^2 = 1$, 
we simply have $\sigma^2= \max(n, m)$.
Moreover, Equation~\eqref{eq:gbt} entails in the case $n=m$:
\begin{equation*}
  \E \normop{\bG} \leq \sigma \sqrt{2 \log(2 n)}.
\end{equation*}
We therefore recover the bounds on $\E{\normop{\bG}}$ that results from 
concentration inequalities obtained by alternative methods \cite{oliveira2010sums, tropp2012user}.
We refer the reader to Section~\ref{sec:discussion} and~\cite{tropp2012user} for a discussion about the sharpness of this result.

The same kind of result can be obtained for a random matrix $\bN$ containing independent entries with a Poisson distribution. 
Take $\bC_t =\bC$ as the $n \times m$ matrix with all entries equal to one, and consider
the $n \times m$ matrix $\bN_t$ with entries $(\bN_t)_{i, j}$ that are homogeneous Poisson processes on $[0, 1]$ with (constant) intensity $\lambda_{i, j}$.
Taking $t = 1$, and forming the matrix $\blambda$ with entries $(\blambda)_{i, j} = \lambda_{i, j}$, we obtain from Corollary~\ref{cor:1} the following.
\begin{corollary}
	Let $\bN$ be a $n \times m$ random matrix whose entries $(\bN)_{i, j}$ have a Poisson distribution with intensity $\lambda_{i, j}$. Then, we have
	\begin{equation*}
	\P \bigg( \normop{\bN - \blambda} \geq \sqrt{2 (\| \blambda \|_{1, \infty}
		\vee \| \blambda \|_{\infty, 1})x } + \frac{x}{3} \bigg) \leq (n+m) e^{-x}
	\end{equation*}
  for any $x > 0$, where $\blambda$ has entries $(\blambda)_{i, j} = \lambda_{i, j}$.
\end{corollary}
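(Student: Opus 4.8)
The plan is to obtain this statement as a direct specialization of Corollary~\ref{cor:1}, followed by a standard reparametrization that trades the $\log(n+m)$ term appearing inside the deviation for the factor $(n+m)$ in front of the exponential. First I would apply Corollary~\ref{cor:1} with $p = n$, $q = m$, time horizon $t = 1$, constant intensity matrix $\blambda_s \equiv \blambda$ (so that each $(\bN_t)_{i,j}$ is an independent homogeneous Poisson process and $\bN := \bN_1$ has the prescribed Poisson marginals with mean $\lambda_{i,j}$), and $\bC_s \equiv \bC$ equal to the $n \times m$ matrix with all entries equal to one. Since $\bC$ is the all-ones matrix, the Hadamard product $\bC \odot d(\bN_t - \bLambda_t)$ reduces to $d(\bN_t - \bLambda_t)$, so the martingale appearing in Corollary~\ref{cor:1} evaluated at $t=1$ is simply $\int_0^1 d(\bN_t - \bLambda_t) = \bN - \blambda$, using $\bLambda_1 = \int_0^1 \blambda \, ds = \blambda$.

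Next I would evaluate the variance and bias constants for this choice. Because $\bC^{\odot 2} = \bC$ is again all ones and $\blambda_s$ is constant, one has $\int_0^1 \bC_s^{\odot 2} \odot \blambda_s \, ds = \blambda$, so the two norms appearing in Corollary~\ref{cor:1} become exactly $\norm{\blambda}_{1,\infty}$ and $\norm{\blambda}_{\infty,1}$; likewise $\sup_{s \in [0,1]} \norm{\bC_s}_\infty = 1$. Writing $v = \norm{\blambda}_{1,\infty} \vee \norm{\blambda}_{\infty,1}$, Corollary~\ref{cor:1} then yields
\begin{equation*}
  \P\Big( \normop{\bN - \blambda} \geq \sqrt{2 v (x + \log(n+m))} + \tfrac{1}{3}(x + \log(n+m)) \Big) \leq e^{-x}
\end{equation*}
for every $x > 0$.

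Finally I would remove the $\log(n+m)$ from inside the deviation by the substitution $y = x + \log(n+m)$. Since $e^{-x} = (n+m)\, e^{-y}$, the displayed bound becomes $\P(\normop{\bN - \blambda} \geq \sqrt{2 v y} + y/3) \leq (n+m)\, e^{-y}$, valid a priori only for $y > \log(n+m)$ (the range corresponding to $x > 0$). For $y \leq \log(n+m)$ the right-hand side $(n+m)\, e^{-y}$ is at least $1$, so the inequality holds trivially; hence it holds for all $y > 0$, which is the claim after renaming $y$ back to $x$. The only point requiring care is this last reparametrization together with the verification that the trivial regime $y \leq \log(n+m)$ is covered; everything else is a routine substitution into Corollary~\ref{cor:1}. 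It is worth noting that the factor $(n+m)$ here arises purely from this reindexing rather than from any union bound over the entries.
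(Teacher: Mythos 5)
Your proposal is correct and follows the paper's own route: the paper likewise obtains this corollary by specializing Corollary~\ref{cor:1} with $t=1$, the all-ones matrix $\bC$, and constant intensities $\blambda$, leaving the reparametrization $y = x + \log(n+m)$ (which converts $e^{-x}$ into the prefactor $(n+m)e^{-y}$) implicit. Your explicit verification that the regime $y \leq \log(n+m)$ is trivially covered is a welcome detail the paper omits.
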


Such a result for random matrices with independent Poisson entries was
not, up to the knowledge of the authors, explicitly exhibited in
literature. Note that, in contrast to the Gaussian case considered in Corollary~\ref{cor:grm}, the variance term depends on the maximum $\ell_1$ norm of rows and columns of $\blambda$, which comes from the subexponentiality of the Poisson distribution.

\subsection{Stochastic integral of a matrix of functions}

In this section we consider the simple case where $\bM_s= M_s $ is scalar martingale 
and $\tT_s$ is a matrix of deterministic functions, i.e., $(\tT_s)_{i,j;k,l} = (\bA_s)_{i,j}$.
Let us suppose, for the sake of simplicity, that $\bC_s=1$.
The matrix martingale $\bZ_t$ therefore writes
\begin{equation}
\label{newmart}
 \bZ_t = \int_0^t \bA_s dM_s.
\end{equation}
If that case, Theorems~\ref{thm:concentration_counting} and~\ref{thm:concentration_continuous} lead to the following.

%where $\tA_s$ stands for a rank 3 tensor of dimension $(n \times m \times q)$ 
%and $\bC_s$ and $\bM_s$ are vectors of dimension $q$.
%The tensor $\tA$ can be conveniently represented by the transpose of a vector of size $q$
%of matrices:
%$$
%\tA = \left(\bA_1,\ldots,\bA_q \right)
%$$
%with in particular
%$$
%\tA_{i,j,k} = (\bA_k)_{i,j}
%$$
%According to this convention, the matrix corresponding to the product of $\tA$ and a vector $\bB$ of dimension $q$ reads:
%$$
%\tA \bB = \sum_{k=1}^q B_k \bA_k
%$$
%It is easy to show that, within this framework, all the proofs of Theorem \ref{thm:concentration_counting} and \ref{thm:concentration_continuous}
%remain valid provided one replaces the coefficients $A_{1,k}$ and the matrix $\bA_k$.
%We then have the following proposition:
%
%\begin{proposition}
%Theorems \ref{thm:concentration_counting} and \ref{thm:concentration_continuous} remain valid for the martingale \eqref{newmart} in respectively
%purely discontinuous and continuous cases, provided
%one replaces $\bA_t$ by $\tA_t$ and $\bB_t$ by $1$ in expressions \eqref{eq:W_def} and \eqref{eq:W_def_cont} .
%This leads to the following expression for the matrix $\bW_t$:
%\begin{equation}
%\label{eq:W_new}
%\bW_t = \begin{bmatrix}
%\sum_{k=1}^q  [(\lambda_k)_t] (C_k)_t^2 (\bA_k)_t (\bA_k)_t^\top & \bO \\ \bO &
%\sum_{k=1}^q [(\lambda_k)_t ](C_k)_t^2 (\bA_k)_t^\top (\bA_k)_t
%\end{bmatrix},
%\end{equation}	
%where the factors $(\lambda_k)_t$ are only present in the discontinuous case.
%\end{proposition} 
%
%
%
%A direct consequence of this result, in the case $q=1$ and $M_t = W_t$ the standard Brownian motion:

\begin{proposition}
\label{prop:troppcont}
Let $b_t = \sup_{s \in [0, t]} \max(\norm{\bA_s}_{2, \infty}, 
\norm{\bA_s}_{\infty, 2})$ if $M_t$ satisfies Assumption \ref{ass:M_d} and take $b_t = 0$ if $M_t$ is a Brownian motion.
Let us define the variance
\begin{equation}
\label{sigma2TroppCont}
\sigma_t^2 = \max \bigg( \Big\| \int_0^t \bA_s \bA^\top_s ds \Big\|_{\op}, 
          \Big\| \int_0^t \bA^\top_s \bA_s ds \Big\|_{\op} \bigg). 
\end{equation}
Then 
\begin{equation}
\label{eq:rgs}
\P \Big( \normop{\bZ_t} \geq \sqrt{2 \sigma_t^2 x} + \frac{x b_t}{3} \Big) \leq (n+m) e^{-x}
\end{equation}
for any $x > 0$.
\end{proposition}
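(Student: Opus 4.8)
The plan is to obtain Proposition~\ref{prop:troppcont} as a direct specialization of Theorems~\ref{thm:concentration_counting} and~\ref{thm:concentration_continuous} to the parameters $p = q = 1$, $\bC_s \equiv 1$, and $(\tT_s)_{i,j;1,1} = (\bA_s)_{i,j}$, so that the tensor has a single ``slab'' $\tT_{\bul;1,1} = \bA_s$ and the martingale reduces to $\bZ_t = \int_0^t \bA_s\, dM_s$. Theorem~\ref{thm:concentration_continuous} is applied when $M_t$ is Brownian (Assumption~\ref{ass:M_c}) and Theorem~\ref{thm:concentration_counting} when $M_t$ is the compensated counting martingale (Assumption~\ref{ass:M_d}). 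First I would simply read off the matrix $\bW_s$ from~\eqref{eq:W_def_cont} and~\eqref{eq:W_def}, using that a single index $(k,l) = (1,1)$ makes all contractions collapse to ordinary matrix and scalar operations.

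Carrying out the contractions, the tensor products $\tT_s \tT_s\T$ and $\tT_s\T \tT_s$ have single slabs $\bA_s \bA_s\T$ and $\bA_s\T \bA_s$, while the tensor–matrix product $\circ$ against the scalar variance rate becomes plain multiplication. In the Brownian case the rate is $1$ and I obtain the block-diagonal
\[
\bW_s = \begin{bmatrix} \bA_s \bA_s\T & \bO \\ \bO & \bA_s\T \bA_s \end{bmatrix},
\qquad
\bV_t = \int_0^t \bW_s \, ds
= \begin{bmatrix} \int_0^t \bA_s \bA_s\T \, ds & \bO \\ \bO & \int_0^t \bA_s\T \bA_s \, ds \end{bmatrix};
\]
in the counting case each block additionally carries the scalar factor $\E(\bJ_1^{\odot 2})\blambda_s$ coming from the predictable quadratic variation of $M$, which under the normalization $\inr{M}_s = s$ again equals $1$. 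Since $\bV_t$ is block diagonal with symmetric positive semidefinite blocks, $\lmax(\bV_t)$ is the larger of the two block largest-eigenvalues, and for a symmetric p.s.d.\ matrix the largest eigenvalue coincides with the operator norm; hence $\sigma^2(\bZ_t) = \lmax(\bV_t) = \max\big(\normop{\int_0^t \bA_s \bA_s\T ds}, \normop{\int_0^t \bA_s\T \bA_s ds}\big) = \sigma_t^2$, which is exactly~\eqref{sigma2TroppCont}. The deviation constant $b_t$ is likewise read off from~\eqref{eq:b_t_def} by the same specialization ($\norm{\bC_s}_\infty = 1$, the tensor operator norms $\norm{\tT_s}_{\op;\infty}$ and $\norm{\tT_s\T}_{\op;\infty}$ reducing to a norm of the matrix $\bA_s$ that controls the largest jump of the symmetrized martingale), and it vanishes in the Brownian case where there is no jump term.

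The last step is a reparametrization: Theorems~\ref{thm:concentration_counting} and~\ref{thm:concentration_continuous} carry a $\log(m+n)$ inside the radius with $e^{-x}$ on the right, whereas~\eqref{eq:rgs} carries $(n+m)$ in front of $e^{-x}$ and no logarithm inside. I would reconcile these by the change of variable $x \mapsto x - \log(m+n)$: taking $v = \sigma_t^2$ (so the side event $\{\sigma^2(\bZ_t)\le v\}$ holds identically by the exact computation above and may be dropped) and $b = b_t$, the radius $\sqrt{2v(x'+\log(m+n))} + b(x'+\log(m+n))/3$ becomes $\sqrt{2\sigma_t^2 x} + b_t x/3$, while $e^{-x'} = (m+n)e^{-x}$; and for $x \le \log(m+n)$ the claim is trivial because $(n+m)e^{-x} \ge 1$, disposing of the constraint $x' > 0$. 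This yields~\eqref{eq:rgs} in both regimes. I expect no genuine analytic obstacle here, since the result is a corollary of two theorems already in hand; the only point requiring care is the tensor bookkeeping of the second paragraph — verifying that $\tT_s \tT_s\T \circ(\cdots)$ really collapses to $\bA_s \bA_s\T$ times a scalar, and that the block-diagonal eigenvalue computation reproduces $\sigma_t^2$ — after which the $\log(m+n)\leftrightarrow(n+m)$ rescaling is mechanical.
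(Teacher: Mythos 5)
Your overall route is exactly the paper's own (the paper gives no written proof beyond the sentence that Theorems~\ref{thm:concentration_counting} and~\ref{thm:concentration_continuous} ``lead to the following''): specialize to $p=q=1$, $\bC_s\equiv 1$, single slab $\tT_{\bul;1,1}=\bA_s$; compute $\bW_s$ and get $\sigma^2(\bZ_t)=\lmax(\bV_t)=\sigma_t^2$; then convert the $\big(x+\log(m+n),\,e^{-x}\big)$ form of the theorems into the $\big(x,\,(m+n)e^{-x}\big)$ form by the shift $x\mapsto x-\log(m+n)$, the range $x\le\log(m+n)$ being vacuous. That part is correct, including your observation that the counting case needs the implicit normalization $\E[(\bJ_1)_{1,1}^{2}]\,\blambda_s\equiv 1$, and that the side events of Theorem~\ref{thm:concentration_counting} can be dropped because $\bA_s$ deterministic makes $\sigma^2(\bZ_t)$ and $b_t$ deterministic.

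There is, however, a genuine gap at the one point you gloss over: the jump constant. With a single full-matrix slab one has $\norm{\tT_s}_{\op;\infty}=\norm{\tT_s^\top}_{\op;\infty}=\normop{\bA_s}$, so Equation~\eqref{eq:b_t_def} specializes to $b_t=J_{\max}\sup_{s\le t}\normop{\bA_s}$, \emph{not} to the constant $\sup_{s\le t}\max\big(\norm{\bA_s}_{2,\infty},\norm{\bA_s}_{\infty,2}\big)$ appearing in the statement. The latter is always $\le\normop{\bA_s}$ and can be smaller by a factor $\sqrt{n}$: for $\bA=n^{-1/2}\dsone\dsone^\top$ the row and column $\ell_2$-norms equal $1$ while $\normop{\bA}=\sqrt n$. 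So your argument proves the inequality with the operator-norm constant, which is strictly weaker than the statement as written; and no amount of bookkeeping can close this, since in the proof of Proposition~\ref{prop:utmartingale} the required p.s.d.\ bound $(\bA_s\bA_s^\top)^{k}\mleq b^{2k-2}\,\bA_s\bA_s^\top$ forces $b\ge\normop{\bA_s}$. In fact the stated $b_t$ (apparently carried over from Proposition~\ref{prop:AMB}, whose slabs are rank-one outer products $\bA_{\bul,k}\bB_{l,\bul}$, for which the slab operator norm does factorize into row/column norms) makes the claim false in the counting case: with $\bA=n^{-1/2}\dsone\dsone^\top$ and $M$ a compensated unit-rate Poisson process, $\normop{\bZ_t}=\sqrt n\,|M_t|$ and $\sigma_t^2=nt$, so \eqref{eq:rgs} with $b_t=1$ would give $\P\big(|M_t|\ge\sqrt{2tx}+x/(3\sqrt n)\big)\le 2n e^{-x}$, which fails for large $n$ at intermediate $x$ (at threshold $y=x/(3\sqrt n)$ the Poisson tail is of order $e^{-y\log(y/t)}$, which exceeds $2ne^{-3\sqrt n\,y}$ whenever $\log(y/t)\ll 3\sqrt n$). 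The correct statement — and what your proof actually delivers — has $b_t=J_{\max}\sup_{s\le t}\normop{\bA_s}$, coinciding with the written one only in the Brownian case $b_t=0$; a careful proof should flag this discrepancy rather than hide it behind the phrase ``a norm of the matrix $\bA_s$''.
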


This result is a continuous time version of an analogous inequality obtained
in~\cite{tropp2012user} for series of random matrices $\bZ_n$ of the form
\begin{equation*}
 \bZ_n = \sum_{k=1}^n \gamma_k \bA_k,  
\end{equation*}
where $\gamma_k$ are i.i.d. zero mean random variables (e.g. standard normal) and $\bA_k$ is a sequence of deterministic matrices. Note that Proposition~\ref{prop:troppcont} allows to recover the result for a discrete sequence $\bZ_n$ simply by considering a piecewise constant matrix-valued process $\bA_s$.

\section*{Acknowledgments}
We gratefully acknowledge the anonymous reviewers of the first version of this paper for their 
helpful comments and suggestions. 
The authors would like to thank Carl Graham and Peter Tankov for various comments on our paper.
This research benefited from the support of the Chair ``Markets in Transition'',
under the aegis of ``Louis Bachelier Finance and Sustainable Growth'' laboratory, a joint initiative of
\'Ecole Polytechnique,
Universit\'e d'\'Evry Val d'Essonne and
F\'ed\'eration Bancaire
Francaise.

\appendix

\counterwithin{lemma}{section}
\counterwithin{proposition}{section}

\section{Tools for the study of matrix martingales in continuous time}
\label{sec:tools}

In this section we give tools for the study of matrix martingales in
continuous time. We proceed by steps. The main result of this section, namely
Proposition~\ref{prop:new-counting-supermart}, proves that the
trace exponential of a matrix martingale is a supermartingale, when
properly corrected by terms involving quadratic covariations.

\subsection{A first tool}

We give first a simple lemma
that links the largest eigenvalues of random matrices to the trace
exponential of their difference.

\begin{lemma}
  \label{lem:deviation}
  Let $\bs X$ and $\bs Y$ be two symmetric random matrices such that
  \begin{equation*}
    \tr \E[ e^{\bX - \bY}] \leq k
  \end{equation*}
  for some $k > 0$.  Then, we have
  \begin{equation*}
    \P[ \lmax(\bX) \geq \lmax(\bY) + x ] \leq k e^{-x}
  \end{equation*}
  for any $x > 0$.
\end{lemma}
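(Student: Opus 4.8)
The plan is to reduce this two-matrix statement to the standard one-matrix Laplace-transform (matrix Chernoff) argument, routing everything through the difference $\bX - \bY$, since this is the only object the hypothesis controls. The crucial first step is Weyl's subadditivity of the top eigenvalue: from the variational characterization $\lmax(\bs A) = \max_{\|u\| = 1} u^\top \bs A\, u$, valid for any symmetric matrix $\bs A$, one immediately gets $\lmax(\bs A + \bs B) \leq \lmax(\bs A) + \lmax(\bs B)$, and applying this to the splitting $\bX = (\bX - \bY) + \bY$ yields
\[
  \lmax(\bX) \leq \lmax(\bX - \bY) + \lmax(\bY).
\]
Hence the event $\{\lmax(\bX) \geq \lmax(\bY) + x\}$ is contained in $\{\lmax(\bX - \bY) \geq x\}$, so that
\[
  \P\big[\lmax(\bX) \geq \lmax(\bY) + x\big] \leq \P\big[\lmax(\bX - \bY) \geq x\big].
\]

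Next I would apply the scalar Chernoff bound to the random variable $\lmax(\bW)$, where $\bW = \bX - \bY$. Using that $t \mapsto e^t$ is increasing and Markov's inequality on the nonnegative variable $e^{\lmax(\bW)}$,
\[
  \P\big[\lmax(\bW) \geq x\big] = \P\big[e^{\lmax(\bW)} \geq e^{x}\big] \leq e^{-x}\, \E\big[e^{\lmax(\bW)}\big].
\]
The key algebraic identity is that spectral calculus for a symmetric matrix gives $e^{\lmax(\bW)} = \lmax(e^{\bW})$, and since $e^{\bW}$ is positive definite its top eigenvalue is dominated by the sum of all of them, i.e. $\lmax(e^{\bW}) \leq \tr e^{\bW}$. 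Taking expectations and exchanging trace with expectation by linearity, $\E[e^{\lmax(\bW)}] \leq \E[\tr e^{\bW}] = \tr \E[e^{\bX - \bY}] \leq k$. Substituting into the previous display produces the claimed bound $k e^{-x}$.

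This is essentially the matrix analogue of the scalar Chernoff method, so I do not expect a deep obstacle; the only delicate point is the very first reduction. One cannot compare the spectra of $\bX$ and $\bY$ directly, since they need not commute, and the hypothesis bounds only the exponential of the difference — so it is essential to pass through Weyl's subadditivity rather than attempt to factor $e^{\bX - \bY}$. The remaining ingredients (measurability of $\lmax(\cdot)$ and of $\tr e^{(\cdot)}$ as continuous functions of the entries, and the interchange of $\tr$ and $\E$) are routine.
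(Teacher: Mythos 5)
Your proof is correct, and it establishes in effect the same pointwise inequality the paper does, namely $e^{\lmax(\bX)-\lmax(\bY)} \leq \tr e^{\bX-\bY}$, followed by Markov's inequality — but it gets there by a genuinely different decomposition. The paper works inside the exponent: from $\bY \mleq \lmax(\bY)\bI$ and the monotonicity of $\bA \mapsto \tr e^{\bA}$ with respect to the positive semidefinite order (a fact quoted from Petz's survey), it deduces $\tr e^{\bX-\bY} \geq \tr e^{\bX-\lmax(\bY)\bI}$, then finishes with $\tr \geq \lmax$ and an indicator-function version of Markov on the event $\{\lmax(\bX)\geq\lmax(\bY)+x\}$. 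You instead work at the eigenvalue level first: Weyl's subadditivity $\lmax(\bX)\leq\lmax(\bX-\bY)+\lmax(\bY)$ reduces the claim to a one-matrix tail bound for $\bW=\bX-\bY$, which you then treat by the scalar Chernoff method via the spectral-mapping identity $e^{\lmax(\bW)}=\lmax(e^{\bW})\leq \tr e^{\bW}$. Your route is marginally more elementary — it uses only the variational characterization of $\lmax$, whereas the Loewner-order monotonicity of the trace exponential, though standard, itself rests on eigenvalue monotonicity — and it cleanly isolates the reusable one-matrix statement $\P[\lmax(\bW)\geq x]\leq e^{-x}\,\E[\tr e^{\bW}]$, which is the matrix Laplace method in its usual form. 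The paper's route keeps $\bY$ inside the trace exponential throughout, so the hypothesis is invoked in a single monotonicity step. Both arguments yield the same constant $k$ and the same bound.
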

\begin{proof}
Using the fact that \cite{petz1994survey}
\begin{equation}
  \label{eq:monotone_tr_exp}
  \bA \mleq \bB \Rightarrow \tr \exp(\bA) \leq \tr \exp(\bB),~~\mbox{for any}~\bA,\bB~\mbox{symmetric},
\end{equation}
along with the fact that $\bY \mleq \lmax(\bY) \bI$, one has
  \begin{equation*}
    \tr \exp(\bs X - \bs Y) \ind{E} \geq \tr
    \exp(\bs X - \lmax(\bY) \bs I) \ind{E},
  \end{equation*}
  where we set
$E = \{ \lmax(\bX) \geq \lmax(\bY) + x \}$.
  Now, since $\lmax(\bM) \leq \tr \bM$ for any symmetric positive definite matrix $\bM$, we
  obtain
  \begin{align*}
    \tr \exp(\bs X - \bs Y) \ind{E} &\geq \lmax(\exp(\bs X -
    \lmax(\bY) \bs I)) \ind{E} \\
    &= \exp(\lmax(\bs X) - \lmax(\bs Y)) \ind{E} \\
    &\geq e^x \ind{E},
  \end{align*}
  so that taking the expectation on both sides proves
  Lemma~\ref{lem:deviation}.
\end{proof}

\subsection{Various definitions and It\^o's Lemma for functions of matrices}
\label{sec:ito}

In this section we describe some classical notions from stochastic calculus~\cite{jacodshi,liptser1989theory}  and extend them to matrix semimartingales. 
Let us recall that the {\em quadratic covariation} of two scalar semimartingales $X_t$ and $Y_t$ is defined as 
\begin{equation*}
[X,Y]_t = X_t Y_t - \int_0^t Y_{t^-} dX_t - \int_0^t X_{t^-} dY_t - X_0 Y_0 \; .
\end{equation*}
It can be proven (see e.g. \cite{jacodshi}) that the non-decreasing process  $[X, X]_t$, often denoted as $[X]_t$, does correspond to the 
{\em quadratic variation} of $X_t$ since it is equal to the limit (in probability) of $\sum_i (X_{t_i}-X_{t_{i-1}})^2$ when the mesh size 
of the partition $\{t_i \}_i$ of the interval $[0,t]$ goes to zero. 

If $X_t$ is a square integrable scalar martingale, then its {\em predictable quadratic variation} $\inr{X}_t$ is defined as the unique predictable increasing process such that $X_t^2 - \inr{X}_t$ 
is a martingale. 
The predictable quadratic covariation between two square integrable scalar martingales $X_t$ and $Y_t$ is then defined from the polarization identity:
\begin{equation*}
  \inr{X ,Y} = \frac{1}{4} \big( \inr{X + Y, X + Y} - 
  \inr{X - Y, X - Y} \big).
\end{equation*} 
A martingale $X_t$ is said to be \emph{continuous} if its sample paths $t\mapsto X_t$ are a.s. continuous, and {\em purely discontinuous}\footnote{Let us note that this definition does not imply that a purely discontinuous martingale is the sum of its jumps: for example a compensated Poisson process $N_t-\lambda t$ is a purely discontinuous martingale that has a continuous component.} 
if $X_0 = 0$ and $\inr{X, Y}_t = 0$ for any continuous martingale $
Y_t$.

The notion of predictable quadratic variation can be extended to semimartingales. Indeed, any semimartingale 
$X_t$ can be represented as a sum:
\begin{equation}
  \label{eq:dec}
  X_t = X_0 + X^{c}_t + X^{d}_t + A_t,
\end{equation}
where $X^{c}_t$ is a continuous local martingale, $X^{d}_t$ is a purely discontinuous local martingale 
and $A_t$ is a process of bounded variations. 
Since in the decomposition~\eqref{eq:dec}, $X^{c}_t$ is 
unambiguously determined, $\inr{X^c}_t$ is therefore well 
defined~\cite{jacodshi}.
Within this framework, one can prove (see e.g.~\cite{jacodshi}) that if 
$X_t$ and $Y_t$ are two semimartingales, then:
\begin{equation}
  \label{qv}
  [X,Y]_t = \inr{X^c,Y^c}_t + \sum_{0\leq s \leq t} 
  \Delta X_s \Delta Y_s.
\end{equation}

All these definitions can be naturally extended to matrix valued semimartingales.
Let $\bX_t$ be a $p \times q$ matrix whose entries are real-valued square-integrable semimartingales. 
We denote by $\inr{\bM}_t$ the matrix of entry-wise predictable quadratic variations. 
The predictable quadratic covariation of $\bX_t$ is defined with the help of the vectorization operator $\vec : \R^{p \times q} \rightarrow \R^{pq}$ which stacks vertically the columns of $\bX$, namely if $\bX \in
\R^{p \times q}$ then
\begin{equation*}
  \vec(\bX) =
  \begin{bmatrix}
    \bX_{1, 1} \cdots \bX_{p, 1} \bX_{1, 2} \cdots \bX_{p, 2} \cdots
    \bX_{1, q} \cdots \bX_{p, q}
  \end{bmatrix}^\top.
\end{equation*}
We define indeed the predictable quadratic covariation matrix $\inr{\vec \bX}_t$ of $\bX_t$ as the $pq \times pq$ matrix with entries
\begin{equation}
  \label{eq:predictable_covariation}
  (\inr{\vec \bX}_t)_{i,j} = \inr{(\vec \bX_t)_i, (\vec \bX_t)_j}
\end{equation}
for $1 \leq i, j \leq pq$, namely such that $\vec(\bX_t) \vec(\bX_t)^\top - \inr{\vec \bX}_t$ is a martingale.
The matrices of quadratic variations $[\bX]_t$ and quadratic covariations $[\vec \bX]_t$ 
are defined along the same line. 

Then according to Equation~\eqref{qv}, we have:
\begin{equation}
  \label{eq:quadratic-variation}
  [\bX]_t = \inr{\bX^c}_t + \sum_{0 \leq s \leq t} (\Delta \bX_s)^2,
\end{equation}
and
\begin{equation*}
  [\vec \bX]_t = \inr{\vec \bX^c}_t + \sum_{0 \leq s \leq t} \vec(\Delta \bX_{s}) \vec(\Delta \bX_{s})^\top.
\end{equation*}
An important tool for our proofs is It\^o's lemma, that allows to
compute the stochastic differential $d F(\bM_t)$ 
where $F : \R^{p \times q} \rightarrow \R$ is a twice 
differentiable function. 
We denote by $\frac{d F}{d \vec(\bX)}$
the $p q$-dimensional vector such that
\begin{equation*}
  \Big[ \frac{d F}{d \vec(\bX)} \Big]_i = \frac{\partial F}{\partial
    (\vec \bX)_i} \; \text{ for } \; 1 \leq i \leq p q.
\end{equation*}
The second order derivative is the $pq \times pq$ symmetric matrix
given by
\begin{equation*}
  \Big[ \frac{d^2 F}{d \vec(\bX)d \vec(\bX)^\top}\Big]_{i, j} =
  \frac{\partial^2 F}{\partial (\vec \bX)_i \partial (\vec \bX)_j}
  \; \text{for} \; 1 \leq i, j \leq pq.
\end{equation*}
A direct application of the multivariate It\^o Lemma (\cite{liptser1989theory} Theorem~1, p.~118) writes for
matrix semimartingales as follows.
\begin{lemma}[It\^o's Lemma]
  \label{lem:ito}
  Let $\{\bX_t\}_{t \geq 0}$ be a $p \times q$ matrix semimartingale and $F : \R^{p \times q} \rightarrow
  \R$ be a twice continuously differentiable function. Then
  \begin{align*}
    dF(\bX_t) &= \Big(\frac{d F}{d \vec(\bX)}(\bX_{t^-}) \Big)^\top
    \vec(d \bX_t) + \Delta F(\bX_t) - \Big(\frac{d F}{d \vec
      \bX}(\bX_{t^-}) \Big)^\top \vec (\Delta \bX_t) \\
    & \quad + \frac 12 \tr \bigg(\Big(\frac{d^2 F}{d \vec(\bX) d
      \vec(\bX)^\top} (\bX_{t^-})  \Big)^\top d\inr{\vec \bX^c}_t 
      \bigg).
  \end{align*}
\end{lemma}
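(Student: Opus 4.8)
The plan is to reduce the matrix statement to the classical multivariate It\^o formula for vector-valued semimartingales by applying the vectorization operator. Since $\vec : \R^{p \times q} \to \R^{pq}$ is a linear isomorphism, the process $\bs x_t := \vec(\bX_t)$ is a $pq$-dimensional vector semimartingale, and the function $\tilde F := F \circ \vec^{-1} : \R^{pq} \to \R$ is twice continuously differentiable precisely because $F$ is. Moreover, because the change of coordinates is linear, the partial derivatives of $\tilde F$ with respect to the coordinates of $\bs x$ coincide entry-by-entry with the components of $\frac{dF}{d\vec(\bX)}$ and of $\frac{d^2F}{d\vec(\bX)d\vec(\bX)^\top}$ as defined just above the lemma. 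It therefore suffices to write the scalar It\^o formula for $\tilde F(\bs x_t)$ and to re-express each of its terms in matrix notation.

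First I would invoke the cited multivariate It\^o lemma (\cite{liptser1989theory}, Theorem~1, p.~118) for the $pq$-dimensional semimartingale $\bs x_t$, which in differential form reads
\begin{align*}
d\tilde F(\bs x_t) &= \sum_{i=1}^{pq}\frac{\partial \tilde F}{\partial x_i}(\bs x_{t^-})\, d(\bs x_t)_i + \frac12 \sum_{i,j=1}^{pq}\frac{\partial^2 \tilde F}{\partial x_i\, \partial x_j}(\bs x_{t^-})\, d\inr{(\bs x^c)_i,(\bs x^c)_j}_t \\
&\quad + \Delta \tilde F(\bs x_t) - \sum_{i=1}^{pq}\frac{\partial \tilde F}{\partial x_i}(\bs x_{t^-})\,\Delta (\bs x_t)_i,
\end{align*}
where $\bs x^c_t$ is the continuous local-martingale part of $\bs x_t$ arising from the decomposition~\eqref{eq:dec}, and where I have used that for continuous local martingales the quadratic covariation appearing in the change-of-variables formula coincides with the predictable quadratic covariation $\inr{\cdot,\cdot}$.

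Next I would translate the four terms. Recognizing $d(\bs x_t)_i = (\vec(d\bX_t))_i$ and $\Delta(\bs x_t)_i = (\vec(\Delta \bX_t))_i$, and that a sum $\sum_i a_i b_i$ is an inner product $\bs a^\top \bs b$, the first term becomes $\big(\tfrac{dF}{d\vec(\bX)}(\bX_{t^-})\big)^\top \vec(d\bX_t)$ and the last becomes $-\big(\tfrac{dF}{d\vec(\bX)}(\bX_{t^-})\big)^\top \vec(\Delta \bX_t)$, while $\Delta \tilde F(\bs x_t) = \Delta F(\bX_t)$ by definition of $\tilde F$. The only computation requiring care is the second-order term: here I would apply the identity $\sum_{i,j} A_{i,j} B_{i,j} = \tr(A^\top B)$ with $A = \frac{d^2F}{d\vec(\bX)d\vec(\bX)^\top}(\bX_{t^-})$ and $B = d\inr{\vec \bX^c}_t$, noting that by the definition~\eqref{eq:predictable_covariation} of the predictable quadratic covariation matrix applied to the continuous part $\bX^c$, one has $\inr{(\bs x^c)_i,(\bs x^c)_j}_t = (\inr{\vec \bX^c}_t)_{i,j}$. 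This converts the double sum into
\[
\frac12 \tr\Big( \Big(\tfrac{d^2F}{d\vec(\bX)d\vec(\bX)^\top}(\bX_{t^-})\Big)^\top d\inr{\vec \bX^c}_t \Big),
\]
which is exactly the last term of the claimed identity. I expect this trace-rewriting, together with the bookkeeping check that the continuous quadratic covariation of the vectorized process matches the predictable covariation of the continuous martingale part, to be the only nontrivial point; no genuine analytic obstacle arises, since the substantive content is already contained in the cited scalar multivariate It\^o formula, and everything else is a transcription between vector and matrix notation.
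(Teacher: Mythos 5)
Your proof is correct and follows exactly the route the paper intends: the paper offers no separate proof of this lemma, presenting it as ``a direct application of the multivariate It\^o Lemma'' of \cite{liptser1989theory}, which is precisely your vectorization argument spelled out. Your translation of each term (inner products for the first-order and jump terms, the identity $\sum_{i,j} A_{i,j}B_{i,j} = \tr(A^\top B)$ for the second-order term, and the identification of the bracket of the vectorized continuous part with $\inr{\vec \bX^c}_t$) is accurate and fills in the bookkeeping the paper leaves implicit.
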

As an application, let us apply Lemma~\ref{lem:ito} to the function $F(\bX)= \tr \exp(\bX)$ that acts on the set of symmetric matrices. This result will be of importance for the proof of our results.
\begin{lemma}[It\^o's Lemma for the trace exponential]
  \label{lem:ito-trace-exp}
  Let $\{\bX_t\}$ be a $d \times d$ symmetric matrix semimartingale.  The It\^o formula for $F(\bX_t) = \tr
  \exp(\bX_t)$ gives
  \begin{equation}
    % \label{eq:ito-trace-exp}
    d(\tr e^{\bX_t})  = \tr(e^{\bX_{t^-}} d \bX_t) + \Delta(\tr
    e^{\bX_t}) - \tr(e^{\bX_{t^-}} \Delta \bX_t) + \frac 12 \sum_{i=1}^d
    \tr(e^{\bX_{t^-}} d\inr{\bX_{\bul, i}^c}_t),
  \end{equation}
  where $\inr{\bX_{\bul, i}^c}_t$ denotes the $d \times d$ predictable quadratic variation of the continuous part of the $i$-th column $(\bX_t)_{\bul, i}$ of $\bX_t$.
\end{lemma}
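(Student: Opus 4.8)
The plan is to apply the general matrix It\^o formula (Lemma~\ref{lem:ito}) to the scalar function $F(\bX)=\tr\exp(\bX)$, which is real-analytic and hence twice continuously differentiable, and then to match each of the four resulting terms with the four terms in the claimed identity. The only analytic inputs needed are the first and second Fr\'echet derivatives of $F$ on symmetric matrices, together with the elementary identity $\vec(\bA)^\top\vec(\bB)=\tr(\bA^\top\bB)$.

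First I would compute the gradient. Differentiating the series $\tr\exp(\bX)=\sum_{k\geq0}\tfrac1{k!}\tr(\bX^k)$ term by term and using cyclicity of the trace gives $\partial F/\partial\bX_{\alpha\gamma}=(e^{\bX})_{\gamma\alpha}$, i.e. $\tfrac{d F}{d\vec(\bX)}(\bX)=\vec(e^{\bX})$. Since $\bX$ is symmetric, $e^{\bX}$ is symmetric, so $\big(\tfrac{dF}{d\vec(\bX)}(\bX_{t^-})\big)^\top\vec(d\bX_t)=\tr(e^{\bX_{t^-}}\,d\bX_t)$ and, identically, $\big(\tfrac{dF}{d\vec(\bX)}(\bX_{t^-})\big)^\top\vec(\Delta\bX_t)=\tr(e^{\bX_{t^-}}\Delta\bX_t)$, while the genuine jump contribution $\Delta F(\bX_t)$ is $\Delta(\tr e^{\bX_t})$ by definition. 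This reproduces the first three terms of the statement verbatim, and no real difficulty arises here.

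The crux is the second-order term $\tfrac12\tr\big((\tfrac{d^2F}{d\vec(\bX)d\vec(\bX)^\top}(\bX_{t^-}))^\top d\inr{\vec \bX^c}_t\big)$. Differentiating $(e^{\bX})_{\gamma\alpha}$ once more produces the Fr\'echet (Daleckii--Krein) form $\tfrac{\partial^2 F}{\partial\bX_{\alpha\gamma}\partial\bX_{\beta\delta}}=\int_0^1(e^{s\bX})_{\gamma\beta}(e^{(1-s)\bX})_{\delta\alpha}\,ds$, and I would contract this $d^2\times d^2$ Hessian against $d\inr{\vec \bX^c}_t$. The target is the algebraic reorganization $\tfrac12\sum_{i=1}^d\tr(e^{\bX_{t^-}}d\inr{\bX_{\bul,i}^c}_t)=\tfrac12\tr\big(e^{\bX_{t^-}}\,d[\bX^c]_t\big)$, where $d[\bX^c]_t$ is the matrix-product continuous quadratic covariation of $\bX_t$; this identity is routine, obtained by grouping the two contracted entries by their common column index $i$ and reading each group as $\tr(e^{\bX_{t^-}}d\inr{\bX_{\bul,i}^c}_t)$.

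The main obstacle is exactly the step linking these two: the Hessian of $\tr\exp$ is not the gradient $e^{\bX}$ ``broadcast'' diagonally but the noncommutative integral above, so a priori the It\^o correction is $\tfrac12\int_0^1\tr\big(e^{s\bX_{t^-}}(d\bX^c_t)\,e^{(1-s)\bX_{t^-}}(d\bX^c_t)\big)ds$ rather than $\tfrac12\tr\big(e^{\bX_{t^-}}(d\bX^c_t)^2\big)$. I would resolve this by passing to the eigenbasis of $\bX_{t^-}$, where the Hessian becomes the divided-difference matrix with entries $\tfrac{e^{\mu_a}-e^{\mu_b}}{\mu_a-\mu_b}$; the collapse onto the stated column-wise trace is then transparent when the continuous increments commute with $\bX_{t^-}$ (the regime produced by the self-adjoint dilation used downstream), and in the general case the stated expression is recovered as the controlling term via the Hermite--Hadamard bound $\tfrac{e^{a}-e^{b}}{a-b}\le\tfrac{e^{a}+e^{b}}2$ applied to the eigenvalue divided differences. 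This single reduction carries all the analytic content; the remaining index bookkeeping and the passage through $\vec(\bA)^\top\vec(\bB)=\tr(\bA^\top\bB)$ are mechanical.
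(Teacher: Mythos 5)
Your identification of the Hessian is the mathematically correct one, and it is precisely where your proposal and the paper part ways. The paper's own proof asserts that $\frac{d^2(\tr e^{\bX})}{d \vec(\bX) d \vec(\bX)^\top} = \bI \otimes e^{\bX}$, justified by the expansion $\tr e^{\bX+\bH} = \tr e^{\bX} + \tr(e^{\bX}\bH) + \tr(e^{\bX}\bH^2) + \cdots$ and the vectorization identity $\tr(\bH e^{\bX}\bH) = (\vec \bH)^\top (\bI \otimes e^{\bX})(\vec \bH)$. But that expansion is wrong in the noncommutative setting (besides the missing factor $\tfrac12$): as you note, the true second-order coefficient is $\tfrac12 \int_0^1 \tr\big(e^{s\bX}\bH e^{(1-s)\bX}\bH\big)\,ds$, which in the eigenbasis of $\bX$ with eigenvalues $\mu_a$ is the divided-difference form $\tfrac12\sum_{a,b} \frac{e^{\mu_a}-e^{\mu_b}}{\mu_a-\mu_b} \bH_{ab}^2$, strictly below $\tfrac12\sum_{a,b}\frac{e^{\mu_a}+e^{\mu_b}}{2}\bH_{ab}^2 = \tfrac12\tr(e^{\bX}\bH^2)$ off the commuting case (take $d=2$, $\mu=(0,2)$ and purely off-diagonal $\bH$: $e^2-1 < 1+e^2$). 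Consequently the \emph{equality} claimed in Lemma~\ref{lem:ito-trace-exp} is false in general; what is true, and what your Hermite--Hadamard step $\frac{e^a-e^b}{a-b} \le \frac{e^a+e^b}{2}$ proves, is the statement with ``$\le$'', since the Hessian comparison holds in the p.s.d.\ sense on symmetric directions and $d\inr{\vec \bX^c}_t$ is a nonnegative (matrix-valued) measure supported on such directions. This weakening is harmless downstream: in Proposition~\ref{prop:new-counting-supermart} the second-order term is \emph{subtracted} inside $L_t$ and the proof only needs $L_{t_2}-L_{t_1} \le {}$ a local-martingale increment, so the inequality version of the lemma suffices; but the lemma and the paper's proof of it should be amended accordingly, and your proposal is the correct repair rather than a reproduction of the paper's argument.

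Two defects in your own write-up, however. First, your claim that commutativity of the continuous increments with $\bX_{t^-}$ is ``the regime produced by the self-adjoint dilation used downstream'' is unfounded: $\sS(\bZ_t)$ is symmetric, but its increments do not commute with $\sS(\bZ_{t^-})$ in general (for $\bZ_t$ driven by Brownian entries this essentially never happens), so the equality cannot be rescued that way --- state the result as an inequality and drop that clause. Second, the ``routine'' regrouping deserves one line, since it is the only remaining content once the Hessian is bounded: writing $d\inr{\vec \bX^c}_t = \sum_{i,j} \bE^{i,j} \otimes d\inr{\bX^c_{\bul,i}, \bX^c_{\bul,j}}_t$ (with $\bE^{i,j}$ the single-entry matrix) and using $\tr(\bA \otimes \bB) = \tr(\bA)\tr(\bB)$ kills the $i \neq j$ blocks and leaves $\sum_{i=1}^d \tr(e^{\bX_{t^-}} d\inr{\bX^c_{\bul,i}}_t)$; this is exactly how the paper concludes, so your zeroth- and first-order terms and the final bookkeeping coincide with the paper's proof, and the divergence --- and your improvement --- is confined to the Hessian step.
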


\begin{proof}
  An easy computation gives
  \begin{equation*}
    \tr e^{\bX + \bH} = \tr e^{\bX} + \tr(e^{\bX} \bH) + \tr(e^{\bX}
    \bH^2) + \text{ higher order terms in } \bH
  \end{equation*}
  for any symmetric matrices $\bX$ and $\bH$. Note that $\tr(e^\bX \bH) =
  (\vec \bH)^\top \vec(e^\bX) $, and we have
  from~\cite{johnson1991topics} Exercise~25 p.~252 that
  \begin{equation*}
    \tr(e^{\bX} \bH^2) = \tr(\bH e^{\bX} \bH) = (\vec
    \bH)^\top (\bI \otimes e^\bX) (\vec \bH),
  \end{equation*}
  where the Kronecker product $\bI \otimes e^\bX$ stands for the block
  matrix
  \begin{equation*}
    \bI \otimes \bY =
    \begin{bmatrix}
      e^\bX & 0 & \cdots & 0 \\
      0 & e^\bX &  & \vdots \\
      \vdots &  & \ddots & 0 \\
      0 & \cdots & 0 & e^\bX
    \end{bmatrix}.
  \end{equation*}
  This entails that
  \begin{equation*}
    \frac{d(\tr e^{\bX})}{d \vec(\bX)} = \vec(e^{\bX}) \;\; \text{ and }
    \;\; \frac{d^2(\tr e^{\bX})}{d \vec(\bX) d \vec(\bX)^\top} = \bI
    \otimes e^\bX.
  \end{equation*}
  Hence, using Lemma~\ref{lem:ito} with $F(\bX) = \tr e^{\bX}$ we obtain
  \begin{align*}
    d(\tr e^{\bX_t}) &= \vec(e^{\bX_{t^-}})^\top \vec(d \bX_t) +
    \Delta(\tr e^{\bX_t}) - \vec(e^{\bX_{t^-}})^\top \vec(\Delta
    \bX_t) \\
    & \quad + \frac 12 \tr \big( (\bI \otimes e^{\bX_{t^-}})
   d\inr{\vec \bX^c}_t \big).
    \end{align*}
    Since $\vec(\bY)^\top \vec(\bZ) = \tr(\bY\bZ)$, one gets
    \begin{align*}
      d(\tr e^{\bX_t}) &= \tr(e^{\bX_{t^-}} d \bX_t) + \Delta(\tr
      e^{\bX_t}) - \tr(e^{\bX_{t^-}} \Delta \bX_t) + \frac 12 \tr\big(
      (\bI \otimes e^{\bX_{t^-}}) d\inr{\vec \bX^c}_t \big).
  \end{align*}
  To conclude the proof of Lemma~\ref{lem:ito-trace-exp},
  it remains to prove that
  \begin{equation*}
    \tr\big( (\bI \otimes e^{\bX_{t^-}}) d \inr{\vec \bX^c}_t \big) = \sum_{i=1}^d \tr(e^{\bX_{t^-}} d\inr{\bX_{\bul, i}^c}_t).
  \end{equation*}
  First, let us 
  write
  \begin{equation*}
    d\inr{\vec \bX^c}_t  = \sum_{1 \leq i, j \leq d} \bE^{i, j} \otimes
    d \inr{\bX_{\bul, i}^c, \bX_{\bul, j}^c}_t,
  \end{equation*}
  where $\bE^{i, j}$ is the $d \times d$ matrix with all entries equal
  to zero excepted for the $(i, j)$-entry, which is equal to one.
  Since
  \begin{equation*}
    (\bA \otimes \bB)(\bC \otimes \bD) = (\bA \bC) \otimes (\bB \bD)
    \quad \text{and} \quad \tr(\bA \otimes \bB) = \tr(\bA) \tr(\bB)
  \end{equation*}
  for any matrices $\bA, \bB, \bC, \bD$ with matching dimensions (see
  for instance~\cite{johnson1991topics}), we have
  \begin{equation*}
    \tr \big((\bI \otimes e^{\bX_{t^-}}) d\inr{\vec \bX^c}_t \big) =  \sum_{1
      \leq i, j \leq d} \tr(\bE^{i, j}) \tr(e^{\bX_{t-}} d \inr{\bX_{\bul,
      i}^c,\bX_{\bul, j}^c}_t) =  \sum_{i=1}^d \tr(e^{\bX_{t-}} d\inr{\bX_{\bul, i}^c, \bX_{\bul, i}^c}_t)
  \end{equation*}
  since $\tr \bE^{i, j} = 0$ for $i \neq j$ and $1$ otherwise. This
  concludes the proof of Lemma~\ref{lem:ito-trace-exp}.
\end{proof}

\subsection{A matrix supermartingale property}

The next proposition is a key property that is used below for the proofs of concentration inequalities both for purely discontinuous  and continuous matrix martingales.

\begin{proposition}
  \label{prop:new-counting-supermart}
Let $\{ \bY_t \}_{t \geq 0}$ be a $d \times d$ symmetric matrix martingale such that $\bY_0 = \bO$ and whose
entries are locally bounded.
Let $\bU_t$ be defined by
 \begin{equation}
    \label{eq:ass_counting-supermart}
    \bU_t =  \sum_{s \leq t} \big(e^{\Delta \bY_{s}} - \Delta \bY_{s}
    - \bI \big).
  \end{equation}
  If the matrix $\bU_t$ has an entry-wise compensator $\bA_t$ \textup(i.e., $\bU_t - \bA_t$ is a matrix martingale\textup) which is predictable, continuous and has finite variation \textup(FV\textup) then the process
  \begin{equation}
    L_t = \tr \exp \Big(\bY_t - \bA_t - \frac 12 \sum_{j=1}^d
    \inr{\bY_{\bul, j}^c}_t\Big)
  \end{equation}
  is a supermartingale. In particular, we have $\E L_t \leq d$ for any $t \geq 0$.
\end{proposition}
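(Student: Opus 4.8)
The plan is to apply the Itô formula for the trace exponential, Lemma~\ref{lem:ito-trace-exp}, to the symmetric matrix semimartingale
\[
\bX_t := \bY_t - \bA_t - \tfrac12 \sum_{j=1}^d \inr{\bY_{\bul,j}^c}_t,
\]
so that $L_t = \tr\exp(\bX_t)$, and then to show that its finite-variation part is non-increasing modulo a local martingale. First I would note that $\bX_t$ is symmetric and that the two processes subtracted from $\bY_t$ are predictable, continuous and of finite variation; hence they change neither the continuous martingale part nor the jumps of $\bY_t$, so that $\bX_t^c = \bY_t^c$ (whence $\inr{\bX_{\bul,i}^c}_t = \inr{\bY_{\bul,i}^c}_t$) and $\Delta \bX_t = \Delta \bY_t$. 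Plugging $d\bX_t = d\bY_t - d\bA_t - \tfrac12\sum_j d\inr{\bY_{\bul,j}^c}_t$ into Lemma~\ref{lem:ito-trace-exp}, the contribution $-\tfrac12\sum_j \tr(e^{\bX_{t^-}} d\inr{\bY_{\bul,j}^c}_t)$ arising from $\tr(e^{\bX_{t^-}} d\bX_t)$ cancels exactly the continuous quadratic-variation term $+\tfrac12\sum_i \tr(e^{\bX_{t^-}} d\inr{\bX_{\bul,i}^c}_t)$ produced by Itô. What survives is
\[
dL_t = \tr(e^{\bX_{t^-}} d\bY_t) - \tr(e^{\bX_{t^-}} d\bA_t) + \big[ \Delta(\tr e^{\bX_t}) - \tr(e^{\bX_{t^-}} \Delta \bY_t) \big].
\]

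Next I would handle the three surviving pieces. The term $\int_0^t \tr(e^{\bX_{s^-}} d\bY_s)$ is a local martingale, because $e^{\bX_{s^-}}$ is predictable and locally bounded (the entries of $\bY$ are locally bounded, while $\bA_t$ and $\sum_j \inr{\bY_{\bul,j}^c}_t$ are continuous, hence locally bounded) and $\bY_t$ is a martingale. For the jump bracket I would invoke, at each jump time $s$, the Golden--Thompson inequality $\tr e^{\bX_{s^-} + \Delta \bY_s} \le \tr(e^{\bX_{s^-}} e^{\Delta \bY_s})$, which gives the jumpwise bound
\[
\Delta(\tr e^{\bX_s}) - \tr(e^{\bX_{s^-}} \Delta \bY_s) \le \tr\big( e^{\bX_{s^-}} (e^{\Delta \bY_s} - \bI - \Delta \bY_s) \big) = \tr(e^{\bX_{s^-}} \Delta \bU_s).
\]
Summing over jumps, the pure-jump part of $L_t$ thus equals $\int_0^t \tr(e^{\bX_{s^-}} d\bU_s)$ plus a non-increasing process. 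Since $\bU_t - \bA_t$ is a martingale and $e^{\bX_{s^-}}$ is predictable and locally bounded, $\int_0^t \tr(e^{\bX_{s^-}} d\bU_s) = \int_0^t \tr(e^{\bX_{s^-}} d\bA_s) + (\text{local martingale})$, and this integral cancels the compensator term $-\int_0^t \tr(e^{\bX_{s^-}} d\bA_s)$ isolated above.

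Collecting the pieces yields $L_t = d + (\text{local martingale}) + (\text{non-increasing process})$ with $L_0 = \tr e^{\bO} = d$, so that $L_t$ is a non-negative local supermartingale. To promote it to a genuine supermartingale I would localize along a reducing sequence $\tau_n \uparrow \infty$, note that each $L^{\tau_n}$ is a supermartingale, and pass to the limit via conditional Fatou (legitimate since $L \ge 0$); the same argument with $s = 0$ gives $\E L_t \le \E L_0 = d$. The step I expect to be the crux is the treatment of the jump term: one must separate out the pure-jump contribution cleanly, establish the jumpwise Golden--Thompson bound, and verify that replacing the jump process $\bU$ by its continuous compensator $\bA$ leaves only a local martingale and a non-increasing remainder. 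It is precisely this cancellation against the $-\tr(e^{\bX_{t^-}} d\bA_t)$ term that forces the drift to be non-positive; a secondary, purely technical point is the local boundedness of $e^{\bX_{s^-}}$ needed to make each stochastic integral above a bona fide local martingale.
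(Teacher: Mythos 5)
Your proposal is correct and follows essentially the same route as the paper's proof: apply Lemma~\ref{lem:ito-trace-exp} to $\bX_t = \bY_t - \bA_t - \frac12\sum_j \inr{\bY_{\bul,j}^c}_t$, use the cancellation of the continuous quadratic-variation terms, bound the jump contribution via Golden--Thompson by $\tr(e^{\bX_{s^-}}\Delta\bU_s)$, and cancel the compensator term so that $L_t$ is dominated by a non-negative local (super)martingale. Your explicit localization-plus-Fatou step at the end only spells out what the paper leaves implicit in its final sentence, so there is nothing to add.
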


Proposition~\ref{prop:new-counting-supermart} can be understood as an extension to random matrices of the exponential supermartingale property given implicitly in the proof of Lemma~2.2 in~\cite{van1995exponential}, or the supermartingale property for multivariate counting processes from~\cite{bremaud1981point}, see Theorem~2, p.~165 and in Chapter~4.13 from~\cite{liptser1989theory}.

\begin{proof}
  Define for short
  \begin{equation*}
    \bX_t = \bY_t - \bA_t - \frac 12 \sum_{j=1}^d \inr{\bY_{\bul,j}^c}_t.
  \end{equation*}
Since $\bA_t$ and $\inr{\bY_{\bul, j}^c}_t$ for $j=1, \ldots,
  d$ are FV processes, then
  \begin{equation}
    \label{eq:quad_var_X_Y_equal}
    \inr{\vec \bX^c} = \inr{\vec \bY^c}
   \end{equation}
  and in particular $\inr{\bY_{\bul, j}^c} = \inr{\bX_{\bul, j}^c}$ for any $j=1, \ldots, d$. Using Lemma~\ref{lem:ito-trace-exp}, one has that for all $t_1 < t_2$:
  \begin{align*}
    L_{t_2} - L_{t_1} &= \int_{t_1}^{t_2} \tr(e^{\bX_{t^-}} d \bX_t) +
    \sum_{t_1 \leq t \leq t_2} \big(\Delta(\tr e^{\bX_{t}}) -
    \tr(e^{\bX_{{t}^-}} \Delta \bX_{t}) \big) \\
    & \quad \quad + \frac 12 \int_{t_1}^{t_2} \sum_{j=1}^d
    \tr(e^{\bX_{t^-}} d\inr{\bX_{\bul,j}^c}_t) \\
    &= \int_{t_1}^{t_2} \tr(e^{\bX_{t^-}} d \bY_t) - \int_{t_1}^{t_2}
    \tr(e^{\bX_{t^-}} d \bA_t) \\
    & \quad \quad + \sum_{t_1 \leq t \leq t_2} \big( \tr(
    e^{\bX_{{t}^-} + \Delta \bY_{t}}) - \tr(e^{\bX_{{t}^-}})
    -\tr(e^{\bX_{t^-}} \Delta \bY_{t}) \big),
  \end{align*}
  where we used~\eqref{eq:quad_var_X_Y_equal} together with the fact
  that $\Delta \bX_t = \Delta \bY_t$, since $\bA_t$ and
  $\inr{\bY_{\bul, j}^c}_t$ are both continuous.

  The Golden-Thompson's inequality, see~\cite{bhatia1997matrix},
  states that $\tr e^{\bA + \bB} \leq \tr( e^\bA e^\bB)$ for any
  symmetric matrices $\bA$ and $\bB$. Using this inequality we get
  \begin{align*}
    L_{t_2} - L_{t_1} &\leq \int_{t_1}^{t_2} \tr(e^{\bX_{t^-}} d
    \bY_t) - \int_{t_1}^{t_2} \tr(e^{\bX_{t^-}} d \bA_t) + \sum_{t_1
      \leq t \leq t_2} \tr \big( e^{\bX_{{t}^-}}(e^{\Delta
      \bY_{t}} -\Delta \bY_{t} - \bI) \big)  \\
    &= \int_{t_1}^{t_2} \tr(e^{\bX_{t^-}} d \bY_t) + \int_{t_1}^{t_2}
    \tr \big(e^{\bX_{t^-}}d(\bU_t - \bA_t) \big).
  \end{align*}
  Since $\bY_t$ and $\bU_t - \bA_t$ are matrix martingales, 
  $e^{\bX_{t^-}}$ is a predictable process with locally bounded entries
  and $L_t \geq 0$, the r.h.s of the last equation corresponds 
  to the variation between $t_1$ and $t_2$ of a non-negative local martingale, i.e., of a supermartingale.  
  It results that $\E[L_{t_2} - L_{t_1}| \cF_{t_1}] \leq 0$,
  which proves that $L_t$ is also a supermartingale.  
  Using this last inequality with $t_1 = 0$ and $t_2 = t$ gives
   $\E[L_{t}] \leq d$. This concludes the proof of
  Proposition~\ref{prop:new-counting-supermart}.
\end{proof}

\subsection{Bounding the odd powers of the dilation operator}

The process $\{ \bZ_t \}$ is not symmetric, hence following~\cite{tropp2012user}, we will force symmetry in our proofs by extending it in larger dimensions, using the symmetric dilation operator~\cite{paulsen2002completely} given, for a matrix $\bX$, by
\begin{equation}
\label{eq:sx}
\sS(\bs X) =
\begin{bmatrix}
\bs 0 & \bs X \\ \bs X^\top & \bs 0
\end{bmatrix}.
\end{equation}
The following Lemma will prove useful:

\begin{lemma}
\label{lem:boundSodd}
	Let $\bX$ be some $n \times m$ matrix and $k \in \N$.
	Then 
\begin{equation*}
 \sS(\bX)^{2k+1} =
 \begin{bmatrix}
 \bs 0 & \bX (\bX^\top \bX)^{k} \\
 \bX^\top (\bX \bX^\top)^{k} & 0
 \end{bmatrix} \mleq
 \begin{bmatrix}
 (\bX \bX^\top)^{k + 1/2} & \bs 0 \\
\bs 0 & (\bX^\top \bX)^{k + 1/2}
 \end{bmatrix}.
\end{equation*}
\end{lemma}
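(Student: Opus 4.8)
The plan is to first establish the explicit form of the odd powers $\sS(\bX)^{2k+1}$ by a direct induction, and then to handle the positive-semidefinite comparison by reducing it to a single-block inequality via a Schur-complement / block-matrix argument.

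First I would compute $\sS(\bX)^2$ directly from the definition~\eqref{eq:sx}: multiplying the two block matrices gives the block-diagonal matrix $\diag[\bX \bX^\top, \bX^\top \bX]$. Raising this to the $k$-th power yields $\diag[(\bX\bX^\top)^k, (\bX^\top\bX)^k]$. Multiplying once more by $\sS(\bX)$ and using the elementary commutation identities $\bX(\bX^\top\bX)^k = (\bX\bX^\top)^k \bX$ and $\bX^\top(\bX\bX^\top)^k = (\bX^\top\bX)^k\bX^\top$ (which follow by the same induction, since $\bX\bX^\top\bX = \bX\bX^\top\bX$ lets a factor migrate across $\bX$) then gives the claimed anti-block form
\begin{equation*}
\sS(\bX)^{2k+1} =
\begin{bmatrix}
\bs 0 & \bX (\bX^\top \bX)^{k} \\
\bX^\top (\bX \bX^\top)^{k} & \bs 0
\end{bmatrix},
\end{equation*}
establishing the first equality.

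For the inequality, I would subtract the two sides and show the difference is p.s.d. Writing $\bR = (\bX\bX^\top)^{k+1/2}$ and $\bS = (\bX^\top\bX)^{k+1/2}$, the difference has the block form $\begin{bmatrix} \bR & -\bX(\bX^\top\bX)^k \\ -\bX^\top(\bX\bX^\top)^k & \bS \end{bmatrix}$, so I must verify this block matrix is p.s.d. The cleanest route is the standard block criterion: a Hermitian block matrix $\begin{bmatrix} \bR & \bB \\ \bB^\top & \bS \end{bmatrix}$ with $\bR, \bS \succcurlyeq \bO$ is p.s.d. provided $\bB = \bR^{1/2} \bK \bS^{1/2}$ for some contraction $\bK$ (i.e. a matrix $\bK$ with $\normop{\bK}\le 1$), equivalently that $\bB \bS^{+} \bB^\top \mleq \bR$ on the appropriate ranges. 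Here the natural guess is to use the polar/singular-value structure of $\bX$: with a singular value decomposition $\bX = \bU \bSigma \bV^\top$, all four blocks are simultaneously diagonalized in the $(\bU, \bV)$ bases, and the inequality decouples into scalar inequalities of the form $\begin{bmatrix} \sigma^{2k+1} & \sigma^{2k+1} \\ \sigma^{2k+1} & \sigma^{2k+1} \end{bmatrix} \mgeq \bO$ on each singular direction (plus trivially p.s.d. contributions on the kernels), each of which is the rank-one p.s.d. matrix $\sigma^{2k+1}\begin{bmatrix} 1 & 1 \\ 1 & 1\end{bmatrix}$ up to sign conventions. Summing these rank-one p.s.d. pieces gives the result.

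The main obstacle is bookkeeping the non-square case and the kernels: $\bX\bX^\top$ and $\bX^\top\bX$ act on spaces of different dimensions $n$ and $m$, so I must be careful that the SVD aligns the off-diagonal blocks with the correct singular directions and that the zero singular values contribute only p.s.d. diagonal terms. I expect the SVD reduction to make this transparent, since after the orthogonal change of basis $\diag[\bU,\bV]$ the whole difference becomes a direct sum of $2\times 2$ blocks $\sigma^{2k+1}\begin{bmatrix} 1 & -1 \\ -1 & 1\end{bmatrix}\mgeq\bO$ over the common singular values together with p.s.d. diagonal remainders, each manifestly positive semidefinite. This sidesteps any delicate operator-monotonicity argument and reduces the lemma to the scalar fact that $\begin{bmatrix} 1 & -1 \\ -1 & 1\end{bmatrix}\mgeq\bO$.
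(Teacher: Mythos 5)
Your proof is correct, and it takes a genuinely different route from the paper's. The paper argues by pure block algebra: it writes $\sS(\bX)^{2k+1} = \bC \bN \bC^\top$ with $\bC = \left[\begin{smallmatrix} \bs 0 & \bI_n \\ \bX^\top & \bs 0 \end{smallmatrix}\right]$ and $\bN$ the anti-diagonal block matrix whose two blocks equal $(\bX\bX^\top)^k$, bounds $\bN$ by its block-diagonal counterpart using positivity of the Kronecker product $\left[\begin{smallmatrix} 1 & -1 \\ -1 & 1 \end{smallmatrix}\right] \otimes (\bX\bX^\top)^k$, conjugates via \eqref{eq:monotone_mult} to get the two asymmetric bounds $\sS(\bX)^{2k+1} \mleq \diag[(\bX\bX^\top)^{k}, (\bX^\top\bX)^{k+1}]$ and $\sS(\bX)^{2k+1} \mleq \diag[(\bX\bX^\top)^{k+1}, (\bX^\top\bX)^{k}]$, and concludes by ``taking the square root of the product'' of these two inequalities. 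You instead diagonalize everything at once through the SVD $\bX = \bU\bSigma\bV^\top$: after conjugating by the orthogonal matrix $\diag[\bU,\bV]$, the difference of the two sides becomes, up to permutation, a direct sum of the $2\times 2$ blocks $\sigma_i^{2k+1}\left[\begin{smallmatrix} 1 & -1 \\ -1 & 1 \end{smallmatrix}\right]$ over the singular values $\sigma_i$, padded with zeros, hence p.s.d. What your route buys is real: besides handling the rectangular case and the kernels transparently, it bypasses the paper's final geometric-mean step, which is the one delicate point of the paper's argument — for a symmetric $T$ that is not p.s.d., $T \mleq \bD_1$ and $T \mleq \bD_2$ with commuting $\bD_1, \bD_2 \mgeq \bO$ do \emph{not} in general imply $T \mleq (\bD_1 \bD_2)^{1/2}$ (e.g. $T = \left[\begin{smallmatrix} -1 & \sqrt{2} \\ \sqrt{2} & -1 \end{smallmatrix}\right]$ satisfies $T \mleq \diag[1,0]$ and $T \mleq \diag[0,1]$ but not $T \mleq \bO$), so that step is only legitimate because of the special anti-diagonal structure of $\sS(\bX)^{2k+1}$, which the paper leaves implicit. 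Conversely, the paper's proof needs no spectral decomposition and stays at the level of the conjugation monotonicity \eqref{eq:monotone_mult} it uses elsewhere. Amusingly, both arguments rest on the same scalar fact, $\left[\begin{smallmatrix} 1 & -1 \\ -1 & 1 \end{smallmatrix}\right] \mgeq \bO$: the paper invokes it through a Kronecker product, you through rank-one $2\times 2$ blocks in the singular basis. One small remark: the Schur-complement/contraction criterion you advertise as the ``cleanest route'' is never actually used in your argument once the SVD is in place, and could simply be dropped from the write-up.
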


\begin{proof}

%\begin{equation}
% \sS(\bX)^{2k}  \! = 
%\begin{bmatrix}
% (\bX \bX ^\top)^k & \bs 0
% \\ \bs 0
% &  (\bX^\top \bX )^k
%\end{bmatrix} \! , \: \; \sS(\bX)^{2k+1} \! \! = \!
%\begin{bmatrix}
%\bs 0 & \bX (\bX^\top \bX)^{k} \\
%\bX^\top (\bX \bX^\top)^{k} & 0
%\end{bmatrix}.
%\end{equation}
The first equality results from a simple algebra. It can be rewritten as:
\begin{equation}
\label{eq:sodd}
\sS(\bX)^{2k+1} = \begin{bmatrix}
\bs 0 & (\bX \bX^\top)^{k} \bX \\
\bX^\top (\bX \bX^\top)^{k}  & 0
\end{bmatrix} = 
\bC 
\begin{bmatrix}
\bs 0 & (\bX \bX^\top)^{k}  \\
 (\bX \bX^\top)^{k}  & 0
\end{bmatrix}
\bC^\top
\end{equation}
where
\begin{equation}
\bC = 
\begin{bmatrix}
\bs 0 & \bI_n \\
\bX^\top  & 0
\end{bmatrix}.
\end{equation}
Since $(\bX \bX^\top)^{k} \mgeq \bs 0 $ and 
\begin{equation*}
  \bA = \begin{bmatrix}
  1 & -1 \\
  -1 &  1
  \end{bmatrix} \mgeq 0,
\end{equation*}
we obtain that $\bA \otimes (\bX \bX^\top)^{k} \mgeq \bs 0$, since the eigenvalues of a Kronecker product $\bA \otimes \bB$ are given by the  products of the eigenvalues of $\bA$ and $\bB$, see~\cite{golub2013matrix}. 
This leads to:
\begin{equation*}
  \begin{bmatrix}
  \bs 0 & (\bX \bX^\top)^{k}  \\
  (\bX \bX^\top)^{k}  & 0
  \end{bmatrix} \mleq
  \begin{bmatrix}
  (\bX \bX^\top)^{k} & \bs 0  \\
  \bs0 & (\bX \bX^\top)^{k}
  \end{bmatrix}.
\end{equation*}
Using the fact that~\cite{petz1994survey}
\begin{equation}
\label{eq:monotone_mult}
\bA \mleq \bB \;\; \Rightarrow \;\; \bC \bA \bC^\top \mleq \bC \bB
\bC^\top
\end{equation}
for any real matrices $\bA, \bB, \bC$ (with compatible dimensions),
we have:
$$
\sS(\bX)^{2k+1} \mleq \bC
\begin{bmatrix}
(\bX \bX^\top)^{k} & \bs 0  \\
\bs0 & (\bX \bX^\top)^{k} 
\end{bmatrix}
\bC^\top = 
\begin{bmatrix}
(\bX \bX^\top)^{k} & \bs 0  \\
\bs0 & (\bX^\top \bX)^{k+1} 
\end{bmatrix}.
$$
Along the same line, one can establish that:
$$
\sS(\bX)^{2k+1} \mleq 
\begin{bmatrix}
(\bX \bX^\top)^{k+1} & \bs 0  \\
\bs0 & (\bX^\top \bX)^{k} 
\end{bmatrix}.
$$
The square root of the product of the two inequalities provides the desired result.
\end{proof}

% section tools (end)

\section{Proof of Theorem~\ref{thm:concentration_counting}}
\label{sec:proof_of_theorem_1}

let us recall the definition~\eqref{eq:sx} of the dilation operator.
Let us point out that $\sS(\bX)$ is symmetric and satisfies $\lmax(\sS(\bX)) = \normop{\sS(\bX)} = \normop{\bX}$. 
Note that $\cS(\bZ_t)$ is purely discontinuous, so that
$\inr{\cS(\bZ)_{\bullet, j}^c}_t = \bO$ for any $j$. Recall that we work on events $\{ \lmax(\bV_t) \leq v \}$ and $\{ b_t \leq b\}$. 

%Let us recall that for a matrix $\bV$, we define $|\bV|$ by taking the absolute value of each entry of $\bV$.
We want to apply Proposition~\ref{prop:new-counting-supermart} (see Section~\ref{sec:tools} above) to $\bY_t = \xi \sS(\bZ_t) / b$.
In order to do so, we need the following Proposition.
\begin{proposition}
\label{prop:utmartingale}
Let the matrix $\bW_t$ be the matrix defined in 
Equation~\eqref{eq:W_def}.
Let any $\xi \ge 0$ be fixed and consider $\phi(x) = e^x - x - 1$ for 
$x \in \R$. Assume that
\begin{equation}
  \label{eq:ass_prop_utmartingale}
  \E \bigg[ \int_0^t
  \frac{\phi \big( \xi J_{\max} \norm{\bC_s}_\infty \max(\norm{\tT_s}_{\op; \infty},\norm{\tT_s^\top}_{\op; \infty})\big)}{J_{\max}^2 \norm{\bC_s}_\infty^2  \max(\norm{\tT_s}^2_{\op; \infty},\norm{\tT_s^\top}^2_{\op; \infty})} 
  (\bW_s)_{i, j} ds \bigg] < +\infty,
\end{equation}
for any $1 \leq i, j \leq m + n$ and grant also Assumption~\ref{ass:M_d} from Section~\ref{sec:discountinuous_martingale}. 
Then, the process
\begin{equation}
  \label{eq:U_t_def}
  \bU_t = \sum_{0 \le s \leq t} \Big( e^{\xi \Delta \sS(\bZ_s)} -
  \xi \Delta \sS(\bZ_{s}) - \bI \Big),
\end{equation}
admits a predictable, continuous and FV compensator $\bLambda_t$ given by
Equation~\eqref{eq:compensator_U_t_def} below.
Moreover, the following upper bound for the semi-definite order
\begin{equation}
  \label{eq:A_t_upper_bound}
  \bLambda_t \mleq \int_0^t \frac{\phi\big(\xi J_{\max} \norm{\bC_s}_\infty \max(\norm{\tT_s}_{\op; \infty},\norm{\tT_s^\top}_{\op; \infty})\big)}{J_{\max}^2 \norm{\bC_s}_\infty^2 \max(\norm{\tT_s}^2_{\op; \infty},\norm{\tT_s^\top}^2_{\op; \infty})}  \bW_s ds
\end{equation}
is satisfied for any $t > 0$.
\end{proposition}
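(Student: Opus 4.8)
The plan is to (i) read off the jumps of $\sS(\bZ_t)$ from Assumption~\ref{ass:M_d}, (ii) write down the compensator $\bLambda_t$ of $\bU_t$, and (iii) establish the semidefinite bound~\eqref{eq:A_t_upper_bound} through a scalar-to-matrix transfer argument. For the jumps: by Assumption~\ref{ass:M_d} the entries of $\bN_t$ never jump simultaneously, so at a jump time $s$ exactly one entry $(k,l)$ of $\bN$ increases by one. Combining~\eqref{eq:defMdis} with the definition~\eqref{eq:martingale} of $\bZ_t$, the corresponding jump is $\Delta\bZ_s = (\bC_s)_{k,l}\,(\bJ_{(\bN_s)_{k,l}})_{k,l}\,(\tT_s)_{\bul;k,l}$, and since $\sS$ is linear, $\Delta\sS(\bZ_s)=\sS(\Delta\bZ_s) = (\bC_s)_{k,l}(\bJ_{(\bN_s)_{k,l}})_{k,l}\,\sS((\tT_s)_{\bul;k,l})$ is a scalar multiple of the fixed symmetric matrix $\sS((\tT_s)_{\bul;k,l})$.

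Next I would identify the compensator. Because the jumps in~\eqref{eq:U_t_def} are triggered by the counting processes $(\bN_t)_{k,l}$ with predictable, continuous, FV intensities $(\blambda_s)_{k,l}$, and because the marks $(\bJ_n)$ are i.i.d. and independent of $\tT,\bC,\bN$, the compensation formula for marked point processes replaces each jump by its $\blambda$-rate and averages the jump contribution over the law of $\bJ_1$, giving
\[
\bLambda_t = \int_0^t \sum_{k,l} \E\Big[ e^{\,\xi(\bC_s)_{k,l}(\bJ_1)_{k,l}\sS((\tT_s)_{\bul;k,l})} - \xi(\bC_s)_{k,l}(\bJ_1)_{k,l}\sS((\tT_s)_{\bul;k,l}) - \bI \Big] (\blambda_s)_{k,l}\, ds,
\]
which is the claimed~\eqref{eq:compensator_U_t_def}. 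Predictability, continuity and FV of $\bLambda_t$ follow from those of $\blambda$ together with local boundedness of the integrand (using $|(\bJ_1)_{k,l}|\le\Jmax$ and local boundedness of $\tT,\bC$), while finiteness of $\E\bLambda_t$ is exactly what~\eqref{eq:ass_prop_utmartingale} guarantees once the bound below is in hand.

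The core is a transfer inequality: for any symmetric $\bX$ with $\normop{\bX}\le\rho$ and any $a\in\R$, one has $e^{a\bX}-a\bX-\bI \mleq \tfrac{\phi(|a|\rho)}{\rho^2}\bX^2$. I would prove this by functional calculus, reducing it to the scalar inequality $\phi(ay)\le \tfrac{\phi(|a|\rho)}{\rho^2}y^2$ for $y\in[-\rho,\rho]$, which follows from the monotonicity of $y\mapsto \phi(y)/y^2$ on $\R$ (a short derivative computation shows $(\phi(y)/y^2)'>0$) together with $|ay|\le|a|\rho$; the sign of $a$ is immaterial. Applying this with $\bX=\sS((\tT_s)_{\bul;k,l})$, $\rho=\normop{(\tT_s)_{\bul;k,l}}\le\norm{\tT_s}_{\op;\infty}$ and $a=\xi(\bC_s)_{k,l}(\bJ)_{k,l}$ with $|a|\le\xi\Jmax\norm{\bC_s}_\infty$, and invoking monotonicity of $\phi(y)/y^2$ once more to enlarge $\rho$ to $R_s:=\max(\norm{\tT_s}_{\op;\infty},\norm{\tT_s^\top}_{\op;\infty})$ and $|a|$ to $\xi\Jmax\norm{\bC_s}_\infty$, produces a prefactor $\tfrac{\phi(\xi\Jmax\norm{\bC_s}_\infty R_s)}{\Jmax^2\norm{\bC_s}_\infty^2 R_s^2}$ that is independent of both $(k,l)$ and $\bJ$, multiplying $(\bC_s)_{k,l}^2(\bJ)_{k,l}^2\,\sS((\tT_s)_{\bul;k,l})^2$. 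Taking $\E$ over $\bJ_1$ (which preserves $\mleq$), summing against $(\blambda_s)_{k,l}$ and integrating in $s$ then yields~\eqref{eq:A_t_upper_bound}, once one recognizes that $\sum_{k,l}\E[(\bJ_1)_{k,l}^2](\bC_s)_{k,l}^2(\blambda_s)_{k,l}\,\sS((\tT_s)_{\bul;k,l})^2$ is precisely the block-diagonal matrix $\bW_s$ of~\eqref{eq:W_def}: indeed $\sS(\bY)^2$ is block-diagonal with blocks $\bY\bY^\top$ and $\bY^\top\bY$, which matches the tensor products $\tT_s\tT_s^\top$, $\tT_s^\top\tT_s$ acted on by $\circ$ against $\E(\bJ_1^{\odot 2})\odot\bC_s^{\odot 2}\odot\blambda_s$.

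I expect the transfer inequality combined with the prefactor bookkeeping to be the main obstacle: one must handle the arbitrary sign of $a$, use the monotonicity of $\phi(y)/y^2$ twice (first to bound $e^{a\bX}-a\bX-\bI$ by a multiple of $\bX^2$, then to make the prefactor uniform over $(k,l)$ and over the random marks so that it can be pulled outside both the $\bJ$-expectation and the sum), and finally match the block-diagonal $\bX^2$ terms to the exact tensor definition of $\bW_s$. By contrast, the identification of the compensator, while it should be stated carefully via the theory of marked point processes, is essentially routine. Note that Lemma~\ref{lem:boundSodd} is \emph{not} needed here, the even power $\sS((\tT_s)_{\bul;k,l})^2$ being already block-diagonal and positive; it will instead be used in the continuation of the proof of Theorem~\ref{thm:concentration_counting}.
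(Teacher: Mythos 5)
Your proof is correct, but it takes a genuinely different route from the paper's. The paper expands $\bU_t$ as the power series $\sum_{s\le t}\sum_{k\ge 2}\xi^k\sS(\Delta\bZ_s)^k/k!$, compensates it term by term (producing the blocks $\bD_{1,s}^{(k)}$, $\bD_{2,s}^{(k)}$, $\bH_s^{(k+1)}$ of Equation~\eqref{eq:compensator_U_t_def}, which carry the signed odd moments $\E[(\bJ_1)_{a,b}^{2k+1}]$ as well as the even ones), then controls the off-diagonal odd-power blocks with Lemma~\ref{lem:boundSodd} (applied with a $(2k+1)$-th root of the odd moment) and finally re-sums the series into $\phi$. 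You instead write the compensator in closed form via the compensation formula for marked point processes --- your expression is the same object as \eqref{eq:compensator_U_t_def}, as one sees by expanding the exponential and integrating term by term against the law of $\bJ_1$; this one-line identification is worth stating explicitly since the statement refers to that equation --- and you replace the entire series bookkeeping by a single semidefinite transfer inequality, $e^{a\bX}-a\bX-\bI \mleq \phi(|a|\rho)\rho^{-2}\bX^2$ for symmetric $\bX$ with $\normop{\bX}\le\rho$, proved by functional calculus from the monotonicity of $y\mapsto\phi(y)/y^2$ on $\R$ (the same device used by Tropp in discrete time). Since your bound involves only the even power $\sS((\tT_s)_{\bul;k,l})^2$, which is block diagonal with blocks matching the tensor products $\tT_s\tT_s^\top$ and $\tT_s^\top\tT_s$ appearing in \eqref{eq:W_def}, the odd-power difficulty disappears entirely: no signed moments, no Lemma~\ref{lem:boundSodd}. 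Your prefactor bookkeeping is exact (the $\xi^2$ cancels, the expectation over the mark is taken only after the p.s.d.\ bound so that only $\E(\bJ_1^{\odot 2})$ survives, and summing over $(k,l)$ against $\blambda_s$ reproduces $\bW_s$), so you land precisely on \eqref{eq:A_t_upper_bound}. What the paper's route buys is a fully explicit, elementary term-by-term construction of \eqref{eq:compensator_U_t_def}; what yours buys is brevity and a cleaner argument in which all the analytic difficulty is isolated in one standard scalar fact.

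One small inaccuracy in your closing remark: Lemma~\ref{lem:boundSodd} is \emph{not} used elsewhere in the paper's proof of Theorem~\ref{thm:concentration_counting}; it is invoked only inside the paper's proof of Proposition~\ref{prop:utmartingale}. Under your approach it would therefore be dispensable altogether. This does not affect the correctness of your argument.
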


This proposition is proved in Section~\ref{appendix:prop3} below.
We use Proposition~\ref{prop:new-counting-supermart}, 
Equation~\eqref{eq:A_t_upper_bound} and 
Equation~\eqref{eq:monotone_tr_exp} together with~\eqref{eq:ass_thm1} to obtain
\begin{equation*}
  \E \Big[\tr \exp \Big(\frac \xi b \sS(\bZ_t) - \int_0^t 
  \frac{\phi\big(\xi J_{\max} \norm{\bC_s}_\infty 
  \max(\norm{\tT_s}_{\op; \infty},\norm{\tT_s^\top}_{\op; \infty})  b^{-1} \big)}{J_{\max}^2 \norm{\bC_s}_\infty^2  \max(\norm{\tT_s}^2_{\op; \infty},\norm{\tT_s^\top}^2_{\op; \infty})}  \bW_s
  ds \Big) \Big] \leq m + n
\end{equation*}
for any $\xi \in [0, 3]$.
Using this with Lemma~\ref{lem:deviation} entails
  \begin{equation*}
    \P \bigg[ \frac{\lmax(\sS(\bZ_t))}{b} \geq \frac 1 \xi \lmax 
    \Big(\int_0^t \frac{\phi\big(\xi J_{\max} \norm{\bC_s}_\infty 
    \max(\norm{\tT_s}_{\op; \infty},\norm{\tT_s^\top}_{\op; \infty}) b^{-1}\big)}{J_{\max}^2 \norm{\bC_s}_\infty^2  \max(\norm{\tT_s}^2_{\op; \infty},\norm{\tT_s^\top}^2_{\op; \infty})}
    \bW_s ds \Big) + \frac x \xi \bigg] \leq (m + n) e^{-x}.
  \end{equation*}
  Note that on $\{ b_t \leq b \}$ we have $J_{\max} \norm{\bC_s}_\infty  \max(\norm{\tT_s}_{\op; \infty},\norm{\tT_s^\top}_{\op; \infty})b^{-1} \leq 1$ for any $s \in [0, t]$.
  The following facts on the function $\phi(x)$ hold true (cf.~\cite{massart2007concentration,hansen_reynaud_bouret_viroirard}):
  \begin{itemize}
  \item[(i)] $\phi(x h)\leq h^2 \phi(x)$ for any $h \in [0,1]$ and $x
    > 0$
  \item[(ii)] $\phi(\xi) \leq \frac{\xi^2}{2(1 - \xi / 3)}$ for any
    $\xi \in (0, 3)$
  \item[(iii)] $\min_{\xi \in (0, 1/c)} \big( \frac{a \xi}{1 - c \xi}
    + \frac x \xi \big) = 2 \sqrt{ax} + c x$ for any $a, c, x > 0$.
  \end{itemize}
  Using successively~(i) and~(ii), one gets, on $\{ b_t \leq b \} \cap \{
  \lmax(\bV_t) \leq v \}$, that for $\xi \in (0,3)$:
  \begin{align*}
    \frac 1 \xi \lmax\Big( &\int_0^t \frac{\phi\big(\xi J_{\max} 
    \norm{\bC_s}_\infty \max(\norm{\tT_s}_{\op; \infty},\norm{\tT_s^\top}_{\op; \infty})
    b^{-1}\big)}{J_{\max}^2 \norm{\bC_s}_\infty^2 
   \max(\norm{\tT_s}^2_{\op; \infty},\norm{\tT_s^\top}^2_{\op; \infty})}  \bW_s ds \Big)
    + \frac x \xi \\
    &\leq \frac{\phi(\xi)}{\xi b^2} \lmax\Big( \int_0^t
    \bW_s ds \Big) +
    \frac x \xi \\
    &= \frac{\phi(\xi)}{\xi b^2} \lmax(\bV_t) + \frac{x}{\xi} \\
    &\leq \frac{\xi v}{2 b^2 (1 - \xi / 3)} + \frac{x}{\xi},
  \end{align*}
  where we recall that $\bV_t$ is given
  by~\eqref{eq:def_V_t_counting}. This gives
  \begin{equation*}
    \P \bigg[ \frac{\lmax(\sS(\bZ_t))}{b} \geq  \frac{\xi v}{2 b^2 (1
      - \xi / 3)}  + \frac x \xi, \quad b_t \leq b, \quad \lmax(\bV_t)
    \leq v \bigg] \leq (m + n) e^{-x},
\end{equation*}
for any $ \xi \in (0,3)$. Now, by optimizing over $\xi$ using (iii)
(with $a = v/2b^2$ and $c = 1/3$), one obtains
\begin{equation*}
  \P \bigg[ \frac{\lmax(\sS(\bZ_t))}{b} \geq \frac{\sqrt{2 v x}}{b} +
  \frac x 3, \quad b_t \leq b, \quad \lmax(\bV_t) \leq v \bigg] \leq
  (m + n) e^{-x}.
\end{equation*}
Since $\lmax(\sS(\bZ_t)) = \normop{\sS(\bZ_t)}$, this concludes the
proof of Theorem~\ref{thm:concentration_counting} when the variance term is expressed using Equation~\eqref{eq:def_V_t_counting}.
It only remains to prove the fact that 
\begin{equation*}
    \sigma^2(\bZ_t) = \lmax(\bV_t).
\end{equation*}
Since $\bW_s$ is block-diagonal, we have obviously:
\begin{align*}
   \lmax(\bV_t) = \max \bigg( &\Big\| \int_0^t \tT_s \tT_s^\top \circ \big(\E(\bJ_1^{\odot 2}) 
   \odot \bC^{\odot 2}_s \odot \blambda_s \big) ds \Big\|_{\op}, \\
    & \quad \Big\| \int_0^t \tT_s^\top \tT_s \circ \big(\E(\bJ_1^{\odot 2}) \odot 
    \bC^{\odot 2}_s \odot \blambda_s \big) ds \Big\|_{\op} \bigg) \; .
\end{align*}
From the definition of $\bZ_t$,
since the entries of $\Delta \bM_t$ do not jump at the same time, the predictable quadratic covariation
of $(\bZ_t)_{k,j}$ and $(\bZ_t)_{l,j}$ is simply the predictable compensator of 
$\sum_{a,b} \sum_{s \leq t} (\tT_s)_{k,j;a,b} (\tT_s)_{l,j} (\bC_s)^2_{a,b} (\bJ_{N_s})^2_{a,b} (\Delta \bN_s)_{a,b}$. It results that
% we have $\E \left[(\Delta \bM_t)_{a,b} (\Delta \bM_t)_{a',b'} | \cF_{t-} \right] = dt \E((\bJ_1)_{a,b}^2) (\blambda_t)_{a,b}
% (\bC_t)_{a,b}^2 \delta_{a,a'} \delta_{b,b'}$. 
\begin{align*}
\sum_{j=1}^n (d \inr{\bZ_{\bul,j}}_t)_{k,l} = &\sum_j (\tT_t)_{k,j;a,b}(\tT_t)_{l,j;a,b} \E((\bJ_1)_{a,b}^2) (\blambda_t)_{a,b}
(\bC_t)_{a,b}^2 dt \\
 = & \left( \tT_t \tT_t^\top \circ \E(\bJ_1^{\odot 2}) \odot \bC_t^{\odot 2} \odot \blambda_t \right)_{k,l} dt.
\end{align*}
An analogous computation for $\inr{\bZ_{j,\bul}}_t$ leads to the expected result, and concludes the proof of Theorem~\ref{thm:concentration_counting}. $\hfill \qedhere$

\section{Proof of Proposition~\ref{prop:utmartingale}}
\label{appendix:prop3}

Let us first remark that:
$$
\exp(\sS(\bX)) = \sum_{k=0}^{\infty} \frac{1}{(2k) !}
\begin{bmatrix}
(\bX \bX ^\top)^k & \bs 0
\\ \bs 0 &  (\bX^\top \bX )^k
\end{bmatrix} + \frac{1}{(2k+1)!}
\begin{bmatrix}
\bs 0 & \bX (\bX^\top \bX)^{k} \\
\bX^\top (\bX \bX^\top)^{k} & 0
\end{bmatrix}.
$$
Then, from the definition of $\bU_t$ in Eq.~\eqref{eq:U_t_def}, we have:
\begin{align*}
  \bU_t  &= \sum_{0 \le s\leq t} \sum_{k \geq 2} \frac {\xi^k
    \sS(\Delta\bZ_s)^k}{k!} \\
    &= \sum_{0 \le s\leq t} \sum_{k \geq 1}
  \begin{bmatrix}
    \frac{\xi^{2k}}{(2k)!}  (\Delta \bZ_s \Delta \bZ_s^\top)^k &
    \frac{\xi^{2k+1}}{(2k+1)!} \Delta \bZ_s (\Delta \bZ_s^\top
    \Delta \bZ_s)^{k+1} \\
    \frac{\xi^{2k+1}}{(2k+1)!} \Delta \bZ_s^\top (\Delta \bZ_s \Delta
    \bZ_s^\top)^{k+1} & \frac{\xi^{2k}}{(2k)!}(\Delta \bZ_s^\top
    \Delta \bZ_s)^k
  \end{bmatrix}.
\end{align*}
Since $(\Delta\bZ_s (\Delta\bZ_s^\top \Delta\bZ_s)^k)^\top =
\Delta\bZ_s^\top(\Delta\bZ_s \Delta\bZ_s^\top)^k$, 
we need to compute three
terms: $(\Delta\bZ_s \Delta\bZ_s^\top)^k$,
$(\Delta\bZ_s^\top \Delta\bZ_s)^k$ and
$\Delta\bZ_s^\top(\Delta\bZ_s \Delta\bZ_s^\top)^k$. 

From Assumption~\ref{ass:M_d}, one has, a.s. that the entries of $\bM_t$
cannot jump at the same time, hence
\begin{equation}
  \label{eq:prod_delta_zero}
  \begin{split}
    (\Delta \bM_t)_{i_1, j_1}& \times \cdots \times (\Delta
    \bM_t)_{i_m, j_m} \\
    &=
    \begin{cases}
       ((\Delta \bM_t)_{i_1, j_1})^m &\text{ if } i_1 = \cdots 
       = i_m \text{ and } j_1 = \cdots = j_m \\
      \bO &\text{ otherwise}
    \end{cases}
  \end{split}
\end{equation}
a.s. for any $t$, $m \geq 2$ and any indexes $i_k \in \{ 1, \ldots, p \}$
and $j_k \in \{ 1, \ldots, q\}$. This entails, with the definition \eqref{eq:martingale} of $\Delta \bZ_s$, that 
$(\Delta \bZ_s \Delta \bZ_s^\top)^k$ is given, a.s., by
\begin{equation*}
    \sum_{a=1}^{p} \sum_{b=1}^{q} ( (\tT_s)_{\bul;a,b} 
    (\tT_s)_{\bul;a,b}^{\top})^k ((\bC_s)_{a,b} (\Delta \bM_s)_{a,b})^{2k} 
    = (\tT_s\tT_s^\top)^k \circ (\bC_s \odot \Delta \bM_s)^{\odot 2k}.
\end{equation*}
Let us remark that Equation~\eqref{eq:ass_prop_utmartingale} entails
\begin{equation*}
  \E \int_0^t  \sum_{k \geq 1} \frac{\xi^{2k}}{(2k)!} \sum_{a=1}^{p} 
  \sum_{b=1}^{q} \big(( (\tT_s)_{\bul;a,b} (\tT_s)_{\bul;a,b}^{\top})^k 
  \big)_{i,j} 
  ((\bC_s)_{a,b})^{2k} \; \E [|\bJ_1|_{a, b}^{2k}] \; (\blambda_s)_{a,b}
  \; ds < + \infty
\end{equation*}
for any $i, j$, so that together with Assumption~\ref{ass:M_d}, 
it is easily seen that the compensator of
\begin{equation}
  \label{eq:proof_compensator_first_term}
  \sum_{0 \le s\leq t} \sum_{k \geq 1}  \frac{\xi^{2k}}{(2k)!}  (\Delta \bZ_s \Delta \bZ_s^\top)^k
\end{equation}
is a.s. given by
\begin{equation*}
  \int_0^t  \sum_{k \geq 1} \frac{\xi^{2k}}{(2k)!} \sum_{a=1}^{p} 
  \sum_{b=1}^{q} ((\tT_s)_{\bul;a,b} (\tT_s)_{\bul;a,b}^{\top})^k 
  (\bC_s)_{a,b}^{2k}  \E[(\bJ_1)_{a,b}^{2k}] (\blambda_s)_{a, b}  ds.
\end{equation*}
Following the same arguments as 
for~\eqref{eq:proof_compensator_first_term}, we obtain that the 
compensator of 
\begin{equation*}
  \label{eq:proof_compensator_second_term_1}
  \sum_{0 \le s\leq t} \sum_{k \geq 1}  \frac{\xi^{2k}}{(2k)!}  (\Delta \bZ_s^\top \Delta \bZ_s)^k
\end{equation*}
is a.s. given by
\begin{equation*}
  \int_0^t  \sum_{k \geq 1} \frac{\xi^{2k}}{(2k)!} \sum_{a=1}^{p} \sum_{b=1}^{q} ((\tT_s)_{\bul;a,b}^{\top} (\tT_s)_{\bul;a,b} )^k (\bC_s)_{a,b}^{2k}  \E[(\bJ_1)_{a,b}^{2k}] (\blambda_s)_{a, b} ds.
\end{equation*}
Along the same line, one can easily show that the compensator of 
\begin{equation*}
  \label{eq:proof_compensator_second_term_2}
  \sum_{0 \le s\leq t} \sum_{k \geq 1}  \frac{\xi^{2k + 1}}{(2k + 1)!} \Delta \bZ_s^\top (\Delta \bZ_s \Delta \bZ_s^\top)^k,
\end{equation*}
reads a.s.:
\begin{equation*}
\int_0^t  \sum_{k \geq 1} \frac{\xi^{2k}}{(2k)!} \sum_{a=1}^{p} 
\sum_{b=1}^{q}  (\tT_s)_{\bul;a,b}^{\top} ((\tT_s)_{\bul;a,b} 
(\tT_s)_{\bul;a,b}^{\top} )^k (\bC_s)_{a,b}^{2k+1}  
\E[(\bJ_1)_{a,b}^{2k+1}] (\blambda_s)_{a, b} 
 ds.
\end{equation*}
Finally, we can write, a.s., the compensator of $\bU_t$ as
\begin{equation}
  \label{eq:compensator_U_t_def}
  \bLambda_t = \int_0^t  \sum_{k \geq 1} \bR_s^{(k)} ds
\end{equation}
where
\begin{equation}
  \label{eq:def_R_s}
  \bR_s^{(k)} =
  \begin{bmatrix}
    \frac{\xi^{2k}}{(2k)!} \bD_{1, s}^{(k)} &
    \frac{\xi^{2k+1}}{(2k+1)!} (\bH_{s}^{(k+1)})^\top \\
    \frac{\xi^{2k+1}}{(2k+1)!}  {\bH_{s}^{(k+1)}}
    &\frac{\xi^{2k}}{(2k)!}  \bD^{(k)}_{2, s}
  \end{bmatrix}
\end{equation}
with
\begin{align*}
  \bD_{1, s}^{(k)} &= \sum_{a=1}^{p} \sum_{b=1}^{q} ((\tT_s)_{\bul;a,b} 
  (\tT_s)_{\bul;a,b}^{\top})^k (\bC_s)_{a,b}^{2k} \E[(\bJ_1)_{a,b}^{2k}] 
  (\blambda_s)_{a, b} \\
  \bD_{2, s}^{(k)} &= \sum_{a=1}^{p} \sum_{b=1}^{q} 
  ((\tT_s)_{\bul;a,b}^\top 
  (\tT_s)_{\bul;a,b} )^k (\bC_s)_{a,b}^{2k} \E[(\bJ_1)_{a,b}^{2k}] 
  (\blambda_s)_{a, b} \\
  \bH_{s}^{(k+1)} &= \sum_{a=1}^{p} \sum_{b=1}^{q}  
  (\tT_s)_{\bul;a,b}^{\top} 
  ((\tT_s)_{\bul;a,b} (\tT_s)_{\bul;a,b}^{\top} )^k (\bC_s)_{a,b}^{2k+1} 
  \E[(\bJ_1)_{a,b}^{2k+1}] (\blambda_s)_{a, b}
\end{align*}
One can now directly use Lemma \ref{lem:boundSodd} with 
$\bX = (\tT_s)_{\bul;a,b} \E[(\bJ_1)_{a,b}^{2k+1}]^{1/(2k+1)} (\bC_s)_{a,b}$ to obtain:
\begin{align*}
\bLambda_t \mleq & \int_0^t \sum_{k \geq 2} \sum_{a=1}^p \sum_{b=1}^{q}
 \frac{\xi^k J_{\max}^{k-2}}{k!}
\begin{bmatrix}
  ((\tT_s)_{\bul;a,b} (\tT_s)_{\bul;a,b}^{\top} )^{k/2} & \bs 0 \\
  \bs 0 &  ((\tT_s)_{\bul;a,b}^\top (\tT_s)_{\bul;a,b})^{k/2}
\end{bmatrix} (\bC_s)_{a,b}^{k} \E[(\bJ_1)_{a,b}^2] (\blambda_s)_{a, b} 
ds \\
= & \int_0^t \sum_{k \geq 2}  \frac{\xi^k J_{\max}^{k-2}}{k!}
\begin{bmatrix}
(\tT_s \circ \tT_s^\top)^{k/2} & \! \! \! \! \bs 0 \\
\bs 0 & \! \! \! \! (\tT_s^{\top} \circ \tT_s)^{k/2}
\end{bmatrix} \circ \big( \bC_s^{\odot k}  \odot \E(\bJ_1^{\odot 2})
\odot \blambda_s\big) ds
\end{align*}
where we used the fact that $|(\bJ_1)_{i, j}| \leq J_{\max}$ a.s. for any $i, j$ under Assumption~\ref{ass:M_d}.
Given the fact that
\begin{equation*}
 ((\tT_s)_{\bul; a,b} (\tT_s)_{\bul;a,b}^\top)^{1/2} \mleq 
 \normop{(\tT_s)_{\bul; a,b}}  \bI_m 
 \mleq \norm{\tT_s}_{\op, \infty} \bI_m
\end{equation*}
for any $a, b$, where we used the notations and definitions from Section~\ref{sec:probback}, we have:
\begin{align*}
\bLambda_t \mleq & \int_0^t \begin{bmatrix}
\tT_s \tT_s^{\top}   & \bs 0 \\
\bs 0 &  \tT_s^\top \tT_s 
\end{bmatrix} 
\circ (\bC_s^{\odot 2} \odot \E(\bJ_1^{\odot 2}) \odot \blambda_s) 
 \sum_{k \geq 2} \frac{\xi^k}{k!} J_{\max}^{k-2} 
 \norm{\bC_s}_\infty^{k-2}  
 \max(\norm{\tT_s}_{\op; \infty},\norm{\tT_s^\top}_{\op; \infty})^{2k-1} ds \\
= & \int_0^t  
\begin{bmatrix}
\tT_s \tT_s^{\top}   & \bs 0 \\
\bs 0 &  \tT_s^\top \tT_s 
\end{bmatrix} 
\circ (\bC_s^{\odot 2} \odot \E(\bJ_1^{\odot 2}) \odot \blambda_s) \frac{\phi \big(\xi J_{\max} \norm{\bC_s}_\infty 
\max (\norm{\tT_s}_{\op; \infty},\norm{\tT_s^\top}_{\op; \infty})\big)}{J_{\max}^2 
\norm{\bC_s}_\infty^2 \max(\norm{\tT_s}^2_{\op; \infty},\norm{\tT_s^\top}^2_{\op; \infty})} ds
\end{align*}
where we recall that $\phi(x) = e^x - 1 - x$.
Hence, we finally get
\begin{equation*}
  \bLambda_t \mleq \int_0^t \frac{\phi \big(\xi J_{\max} \norm{\bC_s}_\infty 
  \max(\norm{\tT_s}_{\op; \infty},\norm{\tT_s^\top}_{\op; \infty})\big)}{J_{\max}^2 \norm{\bC_s}_\infty^2  \max(\norm{\tT_s}^2_{\op; \infty},\norm{\tT_s^\top}^2_{\op; \infty})} 
  \bW_s ds,
\end{equation*}
where $\bW_t$ is given by~\eqref{eq:W_def}. This concludes the proof
of Proposition~\ref{prop:utmartingale}. $\hfill \qed$

\section{Proof of Theorem~\ref{thm:concentration_continuous}}
\label{sec:proof_theorem2}

	The proof follows the same lines as the proof of
	Theorem~\ref{thm:concentration_counting}. We consider as before the
	symmetric dilation $\cS(\bZ_t)$ of $\bZ_t$ (see Eq. \eqref{eq:sx})  and apply
	Proposition~\ref{prop:new-counting-supermart} with $\bY_t = \xi
	\cS(\bZ_t)$ and $d = m + n$. Since $\bZ_t$ is a continuous
	martingale, we have $\bU_t = \bO$
	(cf.~\eqref{eq:ass_counting-supermart}), so that $\inr{\bU}_t = \bO$
	and we have $\inr{\bZ^c}_t = \inr{\bZ}_t$. So,
	Proposition~\ref{prop:new-counting-supermart} gives
	\begin{equation}
		\label{eq:majd}
		\E  \Big[ \tr \exp \Big( \xi \sS(\bZ_t) - \frac 1 2
		\sum_{j=1}^{m+n} \xi^2 \inr{\sS(\bZ)_{\bul,j}}_t \Big) \Big]
		\leq m + n.
	\end{equation}
	From the definition of the dilation operator $\sS$, it can be directly shown that:
	\begin{equation*}
		\sum_{j=1}^{m+n} \inr{\sS(\bZ)_{\bul,j}}_t  =
		\begin{bmatrix}
			\sum_{j=1}^n \inr{\bZ_{\bul,j}}_t & \bO_{m,n} \\ \bO_{n,m} &
			\sum_{j=1}^m \inr{\bZ_{j,\bul}}_t
		\end{bmatrix}
	\end{equation*}
	where $\inr{\bZ_{\bul,j}}_t$ (resp. $\inr{\bZ_{\bul,j}}_t$)
	is the $m \times m$ (resp. $n \times n$) matrix
	of the quadratic variation of the $j$-th column (resp. row) of $\bZ_t$.
	Since $[\bM^\con]_t = \inr{\bM^\con}_t = t \bI$, we have (for the
	sake of clarity, we omit the subscript $t$ in the matrices):
	\begin{align*}
		\sum_{j=1}^n (d\inr{\bZ_{\bul,j}}_t)_{kl} &=
		\sum_{j=1}^n  d[\bZ_{k,j},\bZ_{l,j}] \\
		&= \sum_{j=1}^n \sum_{a=1}^p \sum_{b=1}^q 
    \tT_{k,j;a,b} \tT_{l,j;a,b} \bC_{a,b}^2 dt \\		
		&= \big( \tT_t \tT_t^\top  \circ \bC_t^{\odot 2} \big)_{k,l} dt
	\end{align*}
	which gives in a matrix form
	\begin{equation*}
		\sum_{j=1}^n d\inr{\bZ_{\bul,j}}_t  =  \tT_t \tT_t^\top  \circ \bC_t^{\odot 2} dt
	\end{equation*}
	One can easily prove in the same way that
	\begin{equation*}
		\sum_{j=1}^m d\inr{\bZ_{j,\bul}}_t =  \tT_t^\top \tT_t \circ \bC_t^{\odot 2}  dt
	\end{equation*}
	Thus,
	$$
	\sum_{j=1}^{m+n} \inr{\sS(\bZ)_{\bul,j}^c}_t = \bV_t,
	$$
	where $\bV_t$ is given by \eqref{eq:def_V_t_continuous}.
	From \eqref{eq:majd}, it results
	\begin{eqnarray*}
		\E  \Big[ \tr \exp \Big(
		\xi\sS(\bZ_t)-\frac{\xi^2}{2} \bV_t
		\Big)  \Big] \leq m + n.
	\end{eqnarray*}
	Then, using Lemma \ref{lem:deviation}, one gets
	\begin{equation}
		\P \bigg[ \lmax(\sS(\bZ_t)) \geq \frac \xi 2 \lmax(\bV_t) + \frac
		x \xi \bigg] \leq (m+n) e^{-x}.
	\end{equation}
	On the event $\{\lmax( \bV_t ) \le v\}$, one gets
	\begin{equation}
		\P \bigg[ \lmax(\sS(\bZ_t)) \geq \frac \xi 2 v + \frac
		x \xi ~,~~~ \lmax( \bV_t ) \le v \bigg] \leq (m+n) e^{-x}.
	\end{equation}
	Optimizing on $\xi$, we apply this last result for
   $\xi = \sqrt{2x / v}$ and get
	\begin{equation}
		\P \bigg[ \lmax(\sS(\bZ_t)) \geq \sqrt{2xv}, \quad 
    \lmax( \bV_t ) \le v \bigg] \leq (m+n) e^{-x}.
	\end{equation}
	Since $ \lmax(\sS(\bZ_t)) = \normop{\sS(\bZ_t)}$,
	this concludes the proof of
  Theorem~\ref{thm:concentration_continuous}. $\hfill \qed$

\bibliographystyle{plain}

\bibliography{proba}

\end{document}